\newtheorem{theorem}{Theorem}[section]
\newtheorem{lemma}[theorem]{Lemma}
\newtheorem{proposition}[theorem]{Proposition}
\newtheorem{corollary}[theorem]{Corollary}
\theoremstyle{definition}
\newtheorem{definition}[theorem]{Definition}
\numberwithin{equation}{section}
\renewcommand{\phi}{\varphi}
\newcommand{\BF}{\operatorname{BF}}
\newcommand{\Hom}{\operatorname{Hom}}
\newcommand{\Homeo}{\operatorname{Homeo}}
\newcommand{\id}{\operatorname{id}}
\newcommand{\Ker}{\operatorname{Ker}}
\newcommand{\orb}{\operatorname{orb}}
\newcommand{\K}{\mathbb{K}}
\newcommand{\N}{\mathbb{N}}
\newcommand{\Z}{\mathbb{Z}}
\title{Continuous orbit equivalence of topological Markov shifts 
and Cuntz-Krieger algebras}
\author{Kengo Matsumoto \\
Department of Mathematics \\
Joetsu University of Education \\
Joetsu, Niigata 943-8512, Japan
\and
Hiroki Matui \\
Graduate School of Science \\
Chiba University \\
Inage-ku, Chiba 263-8522, Japan}
\date{}
\begin{document}
\maketitle

\begin{abstract}
Let $A,B$ be square irreducible matrices with entries in $\{0,1\}$. 
We will show that if the one-sided topological Markov shifts 
$(X_A,\sigma_A)$ and $(X_B,\sigma_B)$ are continuously orbit equivalent, 
then the two-sided topological Markov shifts 
$(\bar X_A,\bar\sigma_A)$ and $(\bar X_B,\bar\sigma_B)$ are 
flow equivalent, and hence $\det(\id-A)=\det(\id-B)$. 
As a result, the one-sided topological Markov shifts 
$(X_A,\sigma_A)$ and $(X_B,\sigma_B)$ are continuously orbit equivalent 
if and only if the Cuntz-Krieger algebras 
$\mathcal{O}_A$ and $\mathcal{O}_B$ are isomorphic and 
$\det(\id-A)=\det(\id-B)$. 
\end{abstract}

\section{Introduction}

The interplay between orbit equivalence of topological dynamical systems 
and the theory of $C^*$-algebras has been studied by many authors. 
T. Giordano, I. F. Putnam and C. F. Skau \cite{GPS95crelle} have proved that 
two minimal homeomorphisms on a Cantor set are strongly orbit equivalent 
if and only if the associated $C^*$-crossed products are isomorphic. 
M. Boyle and J. Tomiyama \cite{T96Pacific,BT98JMSJ} have studied 
relationships between orbit equivalence and $C^*$-crossed products 
for topologically free homeomorphisms on compact Hausdorff spaces. 

In this paper, we classify one-sided irreducible topological Markov shifts 
up to continuous orbit equivalence 
and show that there exists a close connection with the Cuntz-Krieger algebras. 
The class of one-sided topological Markov shifts is 
an important class of topological dynamical systems on Cantor sets, 
though they are not homeomorphisms but local homeomorphisms. 
In \cite{Matsumoto10PJM} the first-named author introduced the notion 
of continuous orbit equivalence for one-sided topological Markov shifts 
(see Definition \ref{defofcoe}) 
and proved that one-sided topological Markov shifts 
$(X_A,\sigma_A)$ and $(X_B,\sigma_B)$ for irreducible matrices $A$ and $B$ 
with entries in $\{0,1\}$ are continuously orbit equivalent 
if and only if there exists an isomorphism 
between the Cuntz-Krieger algebras $\mathcal{O}_A$ and $\mathcal{O}_B$ 
preserving their canonical Cartan subalgebras 
$\mathcal{D}_A$ and $\mathcal{D}_B$. 
The second-named author \cite{Matui12PLMS,Matui12} studied 
the associated \'etale groupoids $G_A$ and 
their homology groups $H_n(G_A)$ and topological full groups $[[G_A]]$. 
In fact, the two shifts are continuously orbit equivalent if and only if 
$G_A$ is isomorphic to $G_B$ (see Theorem \ref{coe=cartaniso}). 
In \cite{Matsumoto13PAMS} it was also shown that 
if $\mathcal{O}_A$ is isomorphic to $\mathcal{O}_B$ and 
$\det(\id-A)=\det(\id-B)$, 
then there exists an isomorphism $\Psi:\mathcal{O}_A\to\mathcal{O}_B$ 
such that $\Psi(\mathcal{D}_A)=\mathcal{D}_B$, 
and hence the one-sided topological Markov shifts 
$(X_A,\sigma_A)$ and $(X_B,\sigma_B)$ are continuously orbit equivalent. 
Since there were no known examples of irreducible matrices $A,B$ such that 
$(X_A,\sigma_A)$ and $(X_B,\sigma_B)$ are continuously orbit equivalent and 
$\det(\id-A)\neq\det(\id-B)$, 
the first-named author \cite[Section 6]{Matsumoto13PAMS} presented 
the following conjecture: 
the determinant $\det(1-A)$ is an invariant 
for the continuous orbit equivalence class of $(X_A,\sigma_A)$. 
In the present article we confirm this conjecture. 
In other words we show that 
$(X_A,\sigma_A)$ and $(X_B,\sigma_B)$ are continuously orbit equivalent 
if and only if $\mathcal{O}_A$ is isomorphic to $\mathcal{O}_B$ and 
$\det(\id-A)=\det(\id-B)$ (Theorem \ref{classification}). 

Our proof is closely related to another notion of equivalence for shifts, 
namely flow equivalence for two-sided topological Markov shifts. 
Two-sided topological Markov shifts 
$(\bar X_A,\bar\sigma_A)$ and $(\bar X_B,\bar\sigma_B)$ are 
said to be flow equivalent 
if there exists an orientation-preserving homeomorphism 
between their suspension spaces (\cite{PS75Top}). 
Two characterizations of the flow equivalence are known. 
One is due to M. Boyle and D. Handelman \cite{BH96Israel} and 
the other is due to R. Bowen, J. Franks, W. Parry and D. Sullivan 
\cite{PS75Top,BF77Ann,F84ETDS} 
(see Theorem \ref{flow=barH} and Theorem \ref{flow=BF}). 
By using the former characterization and the groupoid approach, we show that 
if $(X_A,\sigma_A)$ and $(X_B,\sigma_B)$ are continuously orbit equivalent, 
then 
$(\bar X_A,\bar\sigma_A)$ and $(\bar X_B,\bar\sigma_B)$ are flow equivalent 
(Theorem \ref{coe>fe}). 
This, together with the second characterization, implies 
$\det(\id-A)=\det(\id-B)$, and so the conjecture is confirmed. 
It is known that flow equivalence has a close relationship to 
stable isomorphism of Cuntz-Krieger algebras 
(\cite{C81Invent,CK80Invent,F84ETDS,H94ETDS,H95crelle,R95K}). 
As a corollary of the main result, we also prove that 
two-sided irreducible topological Markov shifts 
$(\bar X_A,\bar\sigma_A)$ and $(\bar X_B,\bar\sigma_B)$ are flow equivalent 
if and only if there exists an isomorphism 
between the stable Cuntz-Krieger algebras 
$\mathcal{O}_A\otimes\K$ and $\mathcal{O}_B\otimes\K$ 
preserving their canonical maximal abelian subalgebras 
(Corollary \ref{fe=cartaniso}).

\section{Preliminaries}

We write $\Z_+=\N\cup\{0\}$. 
The transpose of a matrix $A$ is written $A^t$. 
The characteristic function of a set $S$ is denoted by $1_S$. 
We say that a subset of a topological space is clopen 
if it is both closed and open. 
A topological space is said to be totally disconnected 
if its topology is generated by clopen subsets. 
By a Cantor set, 
we mean a compact, metrizable, totally disconnected space 
with no isolated points. 
It is known that any two such spaces are homeomorphic. 
A good introduction to symbolic dynamics can be found 
in the standard textbook \cite{LM} by D. Lind and B. Marcus. 

Let $A=[A(i,j)]_{i,j=1}^N$ be an $N\times N$ matrix with entries in $\{0,1\}$, 
where $1<N\in\N$. 
Throughout the paper, 
we assume that $A$ has no rows or columns identically equal to zero. 
Define 
\[
X_A=\left\{(x_n)_{n\in\N}\in\{1,\dots,N\}^\N\mid 
A(x_n,x_{n+1})=1\quad\forall n\in\N\right\}. 
\]
It is a compact Hausdorff space 
with natural product topology on $\{1,\dots,N\}^\N$. 
The shift transformation $\sigma_A$ on $X_A$ 
defined by $\sigma_A((x_n)_n)=(x_{n+1})_n$ is 
a continuous surjective map on $X_A$. 
The topological dynamical system $(X_A,\sigma_A)$ is called 
the (right) one-sided topological Markov shift for $A$. 
We henceforth assume that $A$ satisfies condition (I) 
in the sense of \cite{CK80Invent}. 
The matrix $A$ satisfies condition (I) if and only if 
$X_A$ has no isolated points, i.e. $X_A$ is a Cantor set. 

We let $(\bar X_A,\bar\sigma_A)$ denote 
the two-sided topological Markov shift. 
Namely, 
\[
\bar X_A=\left\{(x_n)_{n\in\Z}\in\{1,\dots,N\}^\Z\mid 
A(x_n,x_{n+1})=1\quad\forall n\in\Z\right\}
\]
and $\bar\sigma_A((x_n)_{n\in\Z})=(x_{n+1})_{n\in\Z}$. 

A subset $S$ in $X_A$ (resp. in $\bar X_A$) is said to be 
$\sigma_A$-invariant (resp. $\bar\sigma_A$-invariant) 
if $\sigma_A(S)=S$ (resp. $\bar\sigma_A(S)=S$).

\subsection{Continuous orbit equivalence}

For $x=(x_n)_{n\in\N}\in X_A$, 
the orbit $\orb_{\sigma_A}(x)$ of $x$ under $\sigma_A$ is defined by 
\[
\orb_{\sigma_A}(x)=
\bigcup_{k=0}^\infty\bigcup_{l=0}^\infty\sigma_A^{-k}(\sigma_A^l(x)). 
\]

\begin{definition}[{\cite{Matsumoto10PJM}}]\label{defofcoe}
Let $(X_A,\sigma_A)$ and $(X_B,\sigma_B)$ be 
two one-sided topological Markov shifts. 
If there exists a homeomorphism $h:X_A\to X_B$ 
such that $h(\orb_{\sigma_A}(x))=\orb_{\sigma_B}(h(x))$ for $x\in X_A$, 
then $(X_A,\sigma_A)$ and $(X_B,\sigma_B)$ are said to be 
topologically orbit equivalent. 
In this case, there exist $k_1,l_1:X_A\to\Z_+$ such that 
\[
\sigma_B^{k_1(x)}(h(\sigma_A(x)))=\sigma_B^{l_1(x)}(h(x))
\quad\forall x\in X_A. 
\]
Similarly there exist $k_2,l_2:X_B\to\Z_+$ such that 
\[
\sigma_A^{k_2(x)}(h^{-1}(\sigma_B(x)))=\sigma_A^{l_2(x)}(h^{-1}(x))
\quad\forall x\in X_B. 
\]
Furthermore, 
if we may choose $k_1,l_1:X_A\to\Z_+$ and $k_2,l_2:X_B\to\Z_+$ 
as continuous maps, 
the topological Markov shifts $(X_A,\sigma_A)$ and $(X_B,\sigma_B)$ 
are said to be continuously orbit equivalent. 
\end{definition}

If two one-sided topological Markov shifts are topologically conjugate, 
then they are continuously orbit equivalent. 
For the two matrices 
\[
A=\begin{bmatrix}1&1\\1&1\end{bmatrix}\quad\text{and}\quad 
B=\begin{bmatrix}1&1\\1&0\end{bmatrix}, 
\]
the topological Markov shifts $(X_A,\sigma_A)$ and $(X_B,\sigma_B)$ are 
continuously orbit equivalent, 
but not topologically conjugate (see \cite[Section 5]{Matsumoto10PJM}). 

Let $[\sigma_A]$ denote the set of all homeomorphism $\tau$ of $X_A$ 
such that $\tau(x)\in\orb_{\sigma_A}(x)$ for all $x\in X_A$. 
It is called the full group of $(X_A,\sigma_A)$. 
Let $\Gamma_A$ be the set of all $\tau$ in $[\sigma_A]$ 
such that there exist continuous functions $k,l:X_A \to\Z_+$ satisfying 
$\sigma_A^{k(x)}(\tau(x))=\sigma_A^{l(x)}(x)$ for all $x\in X_A$. 
The set $\Gamma_A$ is a subgroup of $[\sigma_A]$ and 
is called the continuous full group for $(X_A,\sigma_A)$. 
We note that the group $\Gamma_A$ has been written as $[\sigma_A]_c$ 
in the earlier paper \cite{Matsumoto10PJM}. 
It has been proved in \cite{Matsumoto12} that 
the isomorphism class of $\Gamma_A$ as an abstract group is 
a complete invariant of the continuous orbit equivalence class 
of $(X_A,\sigma_A)$ 
(see \cite{Matui12} for more general results and further studies).

\subsection{\'Etale groupoids}

By an \'etale groupoid 
we mean a second countable locally compact Hausdorff groupoid 
such that the range map is a local homeomorphism. 
We refer the reader to \cite{R08Irish} 
for background material on \'etale groupoids. 
For an \'etale groupoid $G$, 
we let $G^{(0)}$ denote the unit space and 
let $s$ and $r$ denote the source and range maps. 
For $x\in G^{(0)}$, 
$r(Gx)$ is called the $G$-orbit of $x$. 
When every $G$-orbit is dense in $G^{(0)}$, $G$ is said to be minimal. 
For $x\in G^{(0)}$, 
we write $G_x=r^{-1}(x)\cap s^{-1}(x)$ and call it the isotropy group of $x$. 
The isotropy bundle is 
$G'=\{g\in G\mid r(g)=s(g)\}=\bigcup_{x\in G^{(0)}}G_x$. 
We say that $G$ is principal if $G'=G^{(0)}$. 
When the interior of $G'$ is $G^{(0)}$, 
we say that $G$ is essentially principal. 
A subset $U\subset G$ is called a $G$-set if $r|U,s|U$ are injective. 
For an open $G$-set $U$, 
we let $\pi_U$ denote the homeomorphism $r\circ(s|U)^{-1}$ 
from $s(U)$ to $r(U)$. 

We would like to recall 
the notion of topological full groups for \'etale groupoids. 

\begin{definition}[{\cite[Definition 2.3]{Matui12PLMS}}]
Let $G$ be an essentially principal \'etale groupoid 
whose unit space $G^{(0)}$ is compact. 
\begin{enumerate}
\item The set of all $\alpha\in\Homeo(G^{(0)})$ such that 
for every $x\in G^{(0)}$ there exists $g\in G$ 
satisfying $r(g)=x$ and $s(g)=\alpha(x)$ 
is called the full group of $G$ and denoted by $[G]$. 
\item The set of all $\alpha\in\Homeo(G^{(0)})$ for which 
there exists a compact open $G$-set $U$ satisfying $\alpha=\pi_U$ 
is called the topological full group of $G$ and denoted by $[[G]]$. 
\end{enumerate}
Obviously $[G]$ is a subgroup of $\Homeo(G^{(0)})$ and 
$[[G]]$ is a subgroup of $[G]$. 
\end{definition}

For $\alpha\in[[G]]$ the compact open $G$-set $U$ as above uniquely exists, 
because $G$ is essentially principal. 
Since $G$ is second countable, it has countably many compact open subsets, 
and so $[[G]]$ is at most countable. 
For minimal groupoids on Cantor sets, 
it is known that the isomorphism class of $[[G]]$ is 
a complete invariant of $G$ (\cite[Theorem 3.10]{Matui12}). 

Let $(X_A,\sigma_A)$ be a topological Markov shift. 
The \'etale groupoid $G_A$ for $(X_A,\sigma_A)$ is given by 
\[
G_A=\left\{(x,n,y)\in X_A\times\Z\times X_A\mid
\exists k,l\in\Z_+,\ n=k{-}l,\ \sigma_A^k(x)=\sigma_A^l(y)\right\}. 
\]
The topology of $G_A$ is generated by the sets 
\[
\left\{(x,k{-}l,y)\in G_A
\mid x\in V,\ y\in W,\ \sigma_A^k(x)=\sigma_B^l(y)\right\}, 
\]
where $V,W\subset X_A$ are open and $k,l\in\Z_+$. 
Two elements $(x,n,y)$ and $(x',n',y')$ in $G_A$ are composable 
if and only if $y=x'$, and the multiplication and the inverse are 
\[
(x,n,y)\cdot(y,n',y')=(x,n{+}n',y'),\quad (x,n,y)^{-1}=(y,-n,x). 
\]
The range and source maps are given 
by $r(x,n,y)=(x,0,x)$ and $s(x,n,y)=(y,0,y)$. 
We identify $X_A$ with the unit space $G_A^{(0)}$ via $x\mapsto(x,0,x)$. 
The groupoid $G_A$ is essentially principal. 
The groupoid $G_A$ is minimal if and only if 
$(X_A,\sigma_A)$ is irreducible. 
It is easy to see that 
the topological full group $[[G_A]]$ is canonically isomorphic to 
the continuous full group $\Gamma_A$.

\subsection{Cuntz-Krieger algebras}

Let $A=[A(i,j)]_{i,j=1}^N$ be an $N\times N$ matrix with entries in $\{0,1\}$ 
and let $(X_A,\sigma_A)$ be the one-sided topological Markov shift. 
The Cuntz-Krieger algebra $\mathcal{O}_A$, introduced in \cite{CK80Invent}, 
is the universal $C^*$-algebra 
generated by $N$ partial isometries $S_1,\dots,S_N$ subject to the relations 
\[
\sum_{j=1}^NS_jS_j^*=1\quad\text{and}\quad 
S_i^*S_i=\sum_{j=1}^NA(i,j)S_jS_j^*. 
\]
The subalgebra $\mathcal{D}_A$ of $\mathcal{O}_A$ 
generated by elements $S_{i_1}S_{i_2}\dots S_{i_k}S_{i_k}^*\dots S_{i_1}^*$ 
is naturally isomorphic to $C(X_A)$, 
and is a Cartan subalgebra in the sense of \cite{R08Irish}. 
It is also well-known that 
the pair $(\mathcal{O}_A,\mathcal{D}_A)$ is isomorphic to 
the pair $(C^*_r(G_A),C(X_A))$, 
where $C^*_r(G_A)$ denotes the reduced groupoid $C^*$-algebra and 
$C(X_A)$ is regarded as a subalgebra of it. 
Thus, 
there exists an isomorphism $\Psi:\mathcal{O}_A\to C^*_r(G)$ 
such that $\Psi(\mathcal{D}_A)=C(X_A)$. 

\begin{theorem}\label{coe=cartaniso}
Let $(X_A,\sigma_A)$ and $(X_B,\sigma_B)$ be 
two irreducible one-sided topological Markov shifts. 
The following conditions are equivalent. 
\begin{enumerate}
\item $(X_A,\sigma_A)$ and $(X_B,\sigma_B)$ are 
continuously orbit equivalent. 
\item The \'etale groupoids $G_A$ and $G_B$ are isomorphic. 
\item There exists an isomorphism $\Psi:\mathcal{O}_A\to\mathcal{O}_B$ 
such that $\Psi(\mathcal{D}_A)=\mathcal{D}_B$. 
\end{enumerate}
\end{theorem}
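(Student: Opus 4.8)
The plan is to prove the three conditions equivalent by establishing $(1)\Leftrightarrow(2)$ with an explicit groupoid construction and $(2)\Leftrightarrow(3)$ through Renault's Cartan-pair machinery, using throughout the identification $(\mathcal{O}_A,\mathcal{D}_A)\cong(C^*_r(G_A),C(X_A))$ recalled above. For $(1)\Rightarrow(2)$, suppose $h\colon X_A\to X_B$ is a continuous orbit equivalence with continuous cocycle data $k_1,l_1\colon X_A\to\Z_+$ and $k_2,l_2\colon X_B\to\Z_+$ as in Definition \ref{defofcoe}. I would set $c_1=l_1-k_1$, a continuous $\Z$-valued function, and note that the relation $\sigma_B^{k_1(x)}(h(\sigma_A(x)))=\sigma_B^{l_1(x)}(h(x))$ says exactly that the ``generator'' $(h(x),c_1(x),h(\sigma_A(x)))$ lies in $G_B$ for every $x$. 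I would then define $\Phi\colon G_A\to G_B$ on an element $(x,n,y)$ with $n=k-l$ and $\sigma_A^k(x)=\sigma_A^l(y)$ by
\[
\Phi(x,n,y)=\Bigl(h(x),\ \sum_{i=0}^{k-1}c_1(\sigma_A^i(x))-\sum_{j=0}^{l-1}c_1(\sigma_A^j(y)),\ h(y)\Bigr).
\]
The first task is independence of the choice of $(k,l)$: replacing $(k,l)$ by $(k{+}1,l{+}1)$ changes the sum by $c_1(\sigma_A^k(x))-c_1(\sigma_A^l(y))=0$ since $\sigma_A^k(x)=\sigma_A^l(y)$, and any two admissible pairs differ by such common increments. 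One then checks that $\Phi$ is a continuous groupoid homomorphism landing in $G_B$ (being a product of the generators above), with $\Phi|_{X_A}=h$.

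For $(2)\Rightarrow(1)$, given an isomorphism $\phi\colon G_A\to G_B$, I would take $h$ to be its restriction to unit spaces; since $\phi$ carries $G_A$-orbits to $G_B$-orbits, $h$ is an orbit equivalence. To extract continuous cocycles, I would compose the continuous map $x\mapsto\phi(x,1,\sigma_A(x))$ with the canonical continuous $\Z$-valued cocycle $c_B(y,m,z)=m$ on $G_B$; the resulting $m(x)=c_B(\phi(x,1,\sigma_A(x)))$ is continuous (indeed locally constant). The image of this map is a compact subset of $G_B$, hence covered by finitely many standard generating open sets on each of which the exponents $k,l$ are fixed; pulling these back along $x\mapsto\phi(x,1,\sigma_A(x))$ gives a finite clopen partition of $X_A$ on which $l_1:=k$ and $k_1:=l$ are constant, hence continuous, and satisfy $\sigma_B^{k_1(x)}(h(\sigma_A(x)))=\sigma_B^{l_1(x)}(h(x))$. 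Applying the same argument to $\phi^{-1}$ produces $k_2,l_2$, giving continuous orbit equivalence. (Building $\Psi\colon G_B\to G_A$ symmetrically from $(h^{-1},k_2,l_2)$ and verifying $\Psi\circ\Phi=\id$, $\Phi\circ\Psi=\id$ likewise completes $(1)\Rightarrow(2)$.)

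For $(2)\Leftrightarrow(3)$, the implication $(2)\Rightarrow(3)$ is functorial: a groupoid isomorphism induces an isomorphism of reduced groupoid $C^*$-algebras carrying $C(X_A)$ onto $C(X_B)$, and composing with the isomorphisms $(\mathcal{O}_A,\mathcal{D}_A)\cong(C^*_r(G_A),C(X_A))$ and its $B$-analogue yields $\Psi\colon\mathcal{O}_A\to\mathcal{O}_B$ with $\Psi(\mathcal{D}_A)=\mathcal{D}_B$. The reverse implication $(3)\Rightarrow(2)$ is the substantial one: $\mathcal{D}_A\subset\mathcal{O}_A$ is a Cartan subalgebra and $G_A$ is topologically principal, so Renault's reconstruction theorem recovers $G_A$ as the Weyl groupoid of the pair $(\mathcal{O}_A,\mathcal{D}_A)$; a Cartan-preserving isomorphism $\Psi$ therefore descends to an isomorphism $G_A\cong G_B$ of Weyl groupoids.

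The hard part is this last implication $(3)\Rightarrow(2)$, which rests on reconstructing an effective étale groupoid from its Cartan pair. The key hypothesis to verify is that $G_A$ is topologically principal, i.e.\ that the points of $X_A$ with trivial isotropy are dense; this holds precisely because $A$ satisfies condition (I), which also guarantees that $G_A$ is essentially principal. On the dynamical side the only delicate points are the well-definedness and continuity of $\Phi$ in $(1)\Rightarrow(2)$ and, in $(2)\Rightarrow(1)$, the passage from the merely continuous grading value $m(x)$ to a pair of nonnegative continuous exponents $k_1,l_1$ realizing the $G_B$-relation with the correct powers of $\sigma_B$ — which is exactly what the finite decomposition of a compact subset of $G_B$ into standard generating sets supplies.
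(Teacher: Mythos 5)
The paper's own ``proof'' of this theorem is pure citation: (1)$\Leftrightarrow$(3) is quoted from \cite{Matsumoto10PJM}, and (2)$\Leftrightarrow$(3) from Renault's reconstruction theorem \cite{R08Irish} (see also \cite{Matui12PLMS}). Your proposal instead reproves (1)$\Leftrightarrow$(2) directly at the groupoid level and routes (2)$\Leftrightarrow$(3) through Renault, which is a legitimate and by now standard architecture, different in organization from both the paper and from \cite{Matsumoto10PJM} (where the equivalence with (3) is established by direct $C^*$-algebraic constructions rather than through groupoids). Most of your details are sound: the well-definedness of $\Phi$ via common increments of $(k,l)$ is correct (any two admissible pairs with the same difference are of the form $(k,l)$ and $(k{+}m,l{+}m)$, and applying $\sigma_A^i$ to $\sigma_A^k(x)=\sigma_A^l(y)$ handles each increment); multiplicativity follows from the telescoping of the sums; the compactness argument in (2)$\Rightarrow$(1), refining the pullback cover to a finite clopen partition on which $k_1,l_1$ are constant, is exactly right; and you correctly isolate topological principality of $G_A$ (equivalently, condition (I)) as the hypothesis needed for Renault's theorem. (In (3)$\Rightarrow$(2), Renault's theorem really reconstructs the Weyl twist together with the Weyl groupoid; an isomorphism of Cartan pairs gives an isomorphism of twists covering an isomorphism of the underlying groupoids, which is what yields $G_A\cong G_B$ --- a phrasing point, not a gap.)

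The one genuine gap is your parenthetical claim that $\Psi\circ\Phi=\id$ and $\Phi\circ\Psi=\id$ can be ``verified likewise.'' This is not a routine computation. The composition $\Psi\circ\Phi$ fixes units and sends $(x,n,y)$ to some $(x,m,y)\in G_A$, but nothing in the cocycle bookkeeping forces $m=n$ when $y$ is eventually periodic: there the isotropy group $(G_A)_y\cong\Z$ is nontrivial, so $(x,m,y)$ and $(x,n,y)$ can both lie in $G_A$ with $m\neq n$, and the functions $k_2,l_2$ are a priori unrelated to $k_1,l_1$, so there is no pointwise cancellation. This is precisely the delicate locus in this circle of results (eventually periodic points are where the hard work sits). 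The repair is available from ingredients you already have: if $y$ is not eventually periodic, then $(y,m{-}n,y)=(x,n,y)^{-1}(x,m,y)\in(G_A)_y$ forces $m=n$ by part (1) of the lemma on isotropy groups; the set of arrows whose source has trivial isotropy is dense in $G_A$ (essential principality, i.e.\ condition (I), combined with the source map being an open local homeomorphism); and $\Psi\circ\Phi$ and $\id$ are continuous maps into the Hausdorff space $G_A$, hence agree everywhere once they agree on a dense set. Alternatively one must argue directly that $h$ respects periods and cocycle values at eventually periodic points. Without some such argument your (1)$\Rightarrow$(2) is incomplete; with it, your route is a correct and self-contained replacement for the citations the paper relies on.
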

\begin{proof}
The equivalence between (1) and (3) 
follows from \cite[Theorem 1.1]{Matsumoto10PJM}. 
The equivalence between (2) and (3) 
follows from \cite[Proposition 4.11]{R08Irish} 
(see also \cite[Theorem 5.1]{Matui12PLMS}). 
\end{proof}

\subsection{Flow equivalence}

In this subsection, 
we would like to recall Boyle-Handelman's theorem, 
which says that the ordered cohomology group is a complete invariant 
for flow equivalence between irreducible shifts of finite type. 

Let $A=[A(i,j)]_{i,j=1}^N$ be an $N\times N$ matrix with entries in $\{0,1\}$ 
and consider the two-sided topological Markov shift $(\bar X_A,\bar\sigma_A)$. 
Set 
\[
\bar H^A
=C(\bar X_A,\Z)/\{\xi-\xi\circ\bar\sigma_A\mid\xi\in C(\bar X_A,\Z)\}. 
\]
The equivalence class of a function $\xi\in C(\bar X_A,\Z)$ in $\bar H^A$ 
is written $[\xi]$. 
We define the positive cone $\bar H^A_+$ by 
\[
\bar H^A_+=\{[\xi]\in\bar H^A
\mid\xi(x)\geq0\quad\forall x\in\bar X_A\}. 
\]
The pair $(\bar H^A,\bar H^A_+)$ is called 
the ordered cohomology group of $(\bar X_A,\bar\sigma_A)$ 
(see \cite[Section 1.3]{BH96Israel}). 
The following theorem was proved by M. Boyle and D. Handelman, 
which plays a key role in this paper. 

\begin{theorem}[{\cite[Theorem 1.12]{BH96Israel}}]\label{flow=barH}
Suppose that $(\bar X_A,\bar\sigma_A)$ and $(\bar X_B,\bar\sigma_B)$ are 
irreducible two-sided topological Markov shifts. 
Then the following are equivalent. 
\begin{enumerate}
\item $(\bar X_A,\bar\sigma_A)$ and $(\bar X_B,\bar\sigma_B)$ are 
flow equivalent. 
\item The ordered cohomology groups 
$(\bar H^A,\bar H^A_+)$ and $(\bar H^B,\bar H^B_+)$ are isomorphic, 
i.e. there exists an isomorphism 
$\Phi:\bar H^A\to\bar H^B$ such that $\Phi(\bar H^A_+)=\bar H^B_+$. 
\end{enumerate}
\end{theorem}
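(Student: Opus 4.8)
The plan is to prove the two implications separately, treating $(1)\Rightarrow(2)$ as the soft ``invariance'' half and $(2)\Rightarrow(1)$ as the hard ``completeness'' half. Throughout I would work with the mapping torus (suspension) $Y_A=\bar X_A\times\mathbb{R}/\!\sim$, where $(x,s+1)\sim(\bar\sigma_A(x),s)$, equipped with the flow $\phi_t$ in the $\mathbb{R}$-direction; by definition $(\bar X_A,\bar\sigma_A)$ and $(\bar X_B,\bar\sigma_B)$ are flow equivalent precisely when there is an orientation-preserving homeomorphism $Y_A\to Y_B$ carrying flow lines to flow lines.

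For $(1)\Rightarrow(2)$ I would give an intrinsic, topological description of $(\bar H^A,\bar H^A_+)$ directly from $Y_A$. Since $\bar X_A$ is a Cantor set, $\check H^0(\bar X_A;\mathbb{Z})=C(\bar X_A,\mathbb{Z})$ and $\check H^1(\bar X_A;\mathbb{Z})=0$, so the Wang sequence of the mapping torus collapses to a canonical isomorphism
\[
\check H^1(Y_A;\mathbb{Z})\;\cong\;\operatorname{coker}(\id-\bar\sigma_A^{*})\;=\;\bar H^A .
\]
Identifying $\check H^1(Y_A;\mathbb{Z})$ with homotopy classes of maps $Y_A\to S^1$, the positive cone $\bar H^A_+$ corresponds exactly to those classes admitting a representative that is monotone nondecreasing along every flow line (a strictly positive representative $\xi$ gives a map winding positively along the flow). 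An orientation-preserving homeomorphism $Y_A\to Y_B$ pulls back cohomology isomorphically and sends flow lines to positively-oriented flow lines, hence carries nondecreasing-along-the-flow maps to maps of the same kind; it therefore induces an ordered isomorphism $\bar H^B\to\bar H^A$. This settles invariance.

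The real work is $(2)\Rightarrow(1)$. First I would make $\bar H^A$ computable: every class has a representative depending on finitely many coordinates, and after subtracting coboundaries and passing to a higher-block presentation one reduces to functions of a single coordinate, identifying $(\bar H^A,\bar H^A_+)$ with the stationary dimension group $\varinjlim(\mathbb{Z}^N\xrightarrow{A^{t}}\mathbb{Z}^N\xrightarrow{A^{t}}\cdots)$ ordered by the images of $\mathbb{Z}^N_+$. From this presentation I would extract the Bowen--Franks data: the torsion and eventual range recover the group $\operatorname{coker}(\id-A)$, while the order information recovers the sign of $\det(\id-A)$. Given an ordered isomorphism $\Phi\colon\bar H^A\to\bar H^B$, the aim is to realize it by a finite chain of flow-equivalence moves on the defining matrices --- topological conjugacies (strong shift equivalences) together with Parry--Sullivan expansions and state splittings --- each of which I would check preserves both the group and its positive cone. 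The main obstacle is exactly this realization: translating an abstract order-isomorphism into concrete positive elementary operations on integer matrices while keeping every intermediate matrix nonnegative, and in particular showing that the order structure forces agreement of the determinant sign and not merely of the cokernel group. Once it is established that the invariant $\bigl(\operatorname{coker}(\id-A),\operatorname{sign}\det(\id-A)\bigr)$ is recovered from $(\bar H^A,\bar H^A_+)$, the Bowen--Franks--Parry--Sullivan classification of Theorem \ref{flow=BF} supplies the flow equivalence and closes the argument.
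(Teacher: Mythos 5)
The paper does not actually prove this statement: it is imported verbatim as Boyle--Handelman's theorem \cite[Theorem 1.12]{BH96Israel}, so the only meaningful comparison is with the original argument, which is a long paper's worth of machinery rather than something reproducible in a sketch. Your direction (1)$\Rightarrow$(2) is the standard invariance argument and is fine in outline: $\bar H^A$ is canonically $\check H^1$ of the suspension via the Wang sequence, the positive cone corresponds to circle-valued maps monotone along oriented flow lines, and both structures are preserved by an orientation-preserving homeomorphism carrying flow lines to flow lines. (Even here, the identification of the cones under the Wang isomorphism and the claim that a monotone representative can be converted back into a nonnegative cocycle are asserted rather than proved, but they are believable.)

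The completeness direction (2)$\Rightarrow$(1) contains a concretely false step and then openly defers the remaining, main, difficulty. The false step is the identification of $(\bar H^A,\bar H^A_+)$ with the stationary dimension group $\varinjlim(\Z^N\xrightarrow{A^t}\Z^N\xrightarrow{A^t}\cdots)$. That colimit has rank at most $N$, whereas $\bar H^A$ has infinite rank for every irreducible Markov shift considered here: for distinct finite $\bar\sigma_A$-invariant sets $O_1,\dots,O_k$, the maps $[\xi]\mapsto\sum_{x\in O_i}\xi(x)$ are well-defined homomorphisms $\bar H^A\to\Z$ (coboundaries sum to zero over invariant sets; cf.\ Proposition \ref{whenpositive}), and indicator functions of cylinders separating the orbits show these functionals are linearly independent; since there are infinitely many periodic orbits, the rank is unbounded. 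For the same reason $\bar H^A$ is not the Bowen--Franks group either: for the full $2$-shift $\BF(A)$ is trivial while $[1_{\bar X_A}]\neq0$ in $\bar H^A$. Your proposed reduction ``pass to a higher-block presentation to get functions of one coordinate'' does not terminate in a finite presentation, because recoding enlarges the alphabet and the colimit over all recodings is all of $C(\bar X_A,\Z)$ again; and the stationary dimension group is in any case a shift-equivalence invariant, not a flow invariant, so it could not have served your purpose. With that identification gone, you have no argument that $(\bar H^A,\bar H^A_+)$ determines $\BF(A)$ and the sign of $\det(\id-A)$, which is exactly what your intended appeal to Franks' theorem (Theorem \ref{flow=BF}) requires; and the step you yourself flag as ``the main obstacle'' --- realizing an abstract order isomorphism by a chain of positive matrix moves --- is precisely the core content of Boyle and Handelman's positive-$K$-theoretic work, not a verifiable detail. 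As it stands, the proposal establishes neither direction of the hard half and so does not constitute a proof.
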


We also recall the following from \cite{BH96Israel} for later use. 

\begin{proposition}[{\cite[Proposition 3.13 (a)]{BH96Israel}}]
\label{whenpositive}
Let $(\bar X_A,\bar\sigma_A)$ be a two-sided topological Markov shift 
and let $\xi\in C(\bar X_A,\Z)$. 
Then $[\xi]$ is in $\bar H^A_+$ if and only if 
\[
\sum_{x\in O}\xi(x)\geq0
\]
holds for any finite $\bar\sigma_A$-invariant set $O\subset\bar X_A$. 
\end{proposition}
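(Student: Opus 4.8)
The plan is to prove the two implications separately: the forward (``only if'') direction is an elementary telescoping computation, while the reverse (``if'') direction carries the real content, and I would reduce it to the classical fact that a finite weighted digraph with no negative cycle admits a feasible potential. For the ``only if'' direction, suppose $[\xi]\in\bar H^A_+$, so that $[\xi]=[\xi']$ for some $\xi'\in C(\bar X_A,\Z)$ with $\xi'\geq0$ everywhere; write $\xi=\xi'+\eta-\eta\circ\bar\sigma_A$ with $\eta\in C(\bar X_A,\Z)$. Let $O$ be a finite $\bar\sigma_A$-invariant set. Since $\bar\sigma_A$ is a homeomorphism and $O$ is finite and invariant, $\bar\sigma_A$ restricts to a permutation of $O$, so $\sum_{x\in O}\eta(\bar\sigma_A(x))=\sum_{x\in O}\eta(x)$ and the coboundary term contributes nothing. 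Hence $\sum_{x\in O}\xi(x)=\sum_{x\in O}\xi'(x)\geq0$.

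For the ``if'' direction I would first reduce to the case in which $\xi$ is a weight on the edges of the defining graph. As $\xi$ is a continuous $\Z$-valued function on the compact totally disconnected space $\bar X_A$, it depends on only finitely many coordinates $x_{-m},\dots,x_m$; passing to a higher block presentation yields a topological conjugacy that identifies $\bar\sigma_A$, the group $\bar H^A$ together with its positive cone, and the family of finite invariant sets with their counterparts for the recoded shift, while turning $\xi$ into a function of the form $f(x_0,x_1)$. I may thus assume $\xi(x)=f(x_0,x_1)$, a $\Z$-valued weight on the edges $i\to j$ (those with $A(i,j)=1$) of the finite directed graph of $A$. Under this identification a finite invariant set is a finite union of periodic orbits, and a periodic orbit corresponds to a cycle $i_0\to i_1\to\cdots\to i_{n-1}\to i_0$ along which the orbit sum of $\xi$ equals $\sum_k f(i_k,i_{k+1})$. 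The hypothesis therefore says precisely that every cycle of the graph has nonnegative total weight.

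It then remains to manufacture the coboundary. Adjoin a source vertex joined to every state $j$ by an edge of weight $0$, and set $\eta(j)$ equal to the minimum weight of a path from the source to $j$. Because the augmented graph still has no negative cycle and every vertex is reachable from the source, these minima are finite, and they are integers since all weights are integers; thus $\eta\colon\{1,\dots,N\}\to\Z$ defines an element of $C(\bar X_A,\Z)$. The optimality (edge relaxation) inequalities $\eta(j)\leq\eta(i)+f(i,j)$ for every edge $i\to j$ read $f(i,j)+\eta(i)-\eta(j)\geq0$, i.e. $\xi'=\xi+\eta-\eta\circ\bar\sigma_A\geq0$ on all of $\bar X_A$. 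Since $[\xi]=[\xi']$, we conclude $[\xi]\in\bar H^A_+$.

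I expect the main obstacle to lie entirely in this reverse direction. The existence of a feasible potential is classical (Bellman--Ford / Gallai), so the points genuinely needing care are the recoding bookkeeping --- verifying that the higher block conjugacy really does match the ordered cohomology groups and the periodic data --- and the integrality of $\eta$ for a possibly reducible graph, which the source-vertex device above is designed to secure.
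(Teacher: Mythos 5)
Your proof is correct, but note first that there is nothing in the paper to compare it with: the paper states this proposition as an imported result, citing \cite[Proposition 3.13 (a)]{BH96Israel}, and gives no proof of its own. Your argument is a sound self-contained substitute. The forward direction is exactly the standard computation: $\bar\sigma_A$ permutes a finite invariant set, so coboundaries sum to zero over it. The reverse direction via higher-block recoding plus a feasible potential is also correct, and is in the same spirit as the graph-theoretic arguments underlying Boyle--Handelman's treatment of positivity for shifts of finite type (the ``no negative cycle $\Rightarrow$ integral feasible potential'' fact of Gallai/Bellman--Ford type). Two small points deserve to be made explicit. First, the hypothesis literally gives nonnegativity of orbit sums, where an orbit of least period $p$ contributes each of its $p$ points once; a general closed walk of length $n$ in the graph corresponds to a periodic point whose least period $p$ divides $n$, and its weight is $(n/p)$ times the orbit sum, so every closed walk (hence every simple cycle, hence by decomposition every walk) has nonnegative weight --- this is what the shortest-path argument actually needs, and your phrasing ``every cycle has nonnegative total weight'' silently absorbs this multiplicity bookkeeping. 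Second, finiteness of the shortest-path values uses that any walk from the adjoined source decomposes as a simple path plus cycles of nonnegative weight, so the minimum is attained over the finite set of simple paths; you assert this correctly. The function $\eta(x)=d(x_0)$ depends on one coordinate, hence is automatically continuous, and $\xi+\eta-\eta\circ\bar\sigma_A\geq0$ pointwise since every pair $(x_0,x_1)$ of consecutive coordinates is an edge. The recoding bookkeeping you flag is routine: a topological conjugacy induces an isomorphism of $C(\bar X,\Z)$ intertwining the coboundary subgroups, preserving nonnegative functions, and bijecting finite invariant sets, and replacing $\xi$ by a cohomologous function changes neither hypothesis (orbit sums are cohomology invariants, by your forward direction) nor conclusion. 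One incidental gain of your route: it nowhere uses irreducibility, matching the generality in which the proposition is stated.
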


In the same way as above, we introduce $(H^A,H^A_+)$ 
for the one-sided topological Markov shift $(X_A,\sigma_A)$ as follows: 
\[
H^A=C(X_A,\Z)/\{\xi-\xi\circ\sigma_A\mid\xi\in C(X_A,\Z)\}
\]
and 
\[
H^A_+=\{[\xi]\in H^A\mid\xi(x)\geq0\quad\forall x\in X_A\}. 
\]
We will show that 
$(\bar H^A,\bar H^A_+)$ and $(H^A,H^A_+)$ are actually isomorphic 
(Lemma \ref{H=barH}).

\subsection{The Bowen-Franks group}

Let $A=[A(i,j)]_{i,j=1}^N$ be an $N\times N$ matrix with entries in $\{0,1\}$. 
The Bowen-Franks group $\BF(A)$ is the abelian group $\Z^N/(\id-A)\Z^N$. 
R. Bowen and J. Franks \cite{BF77Ann} have proved that 
the Bowen-Franks group is an invariant of flow equivalence. 
W. Parry and D. Sullivan \cite{PS75Top} have proved that 
the determinant of $\id-A$ is also an invariant of flow equivalence. 
Evidently, if $\BF(A)$ is an infinite group, then $\det(\id-A)$ is zero. 
If $\BF(A)$ is a finite group, 
then $\lvert\det(\id-A)\rvert$ is equal to the cardinality of $\BF(A)$. 
Therefore it is sufficient to know the Bowen-Franks group and 
the sign of the determinant in order to find the determinant. 
The following theorem by Franks shows that these invariants are complete. 

\begin{theorem}[{\cite{F84ETDS}}]\label{flow=BF}
Suppose that $(\bar X_A,\bar\sigma_A)$ and $(\bar X_B,\bar\sigma_B)$ are 
irreducible two-sided topological Markov shifts. 
Then 
$(\bar X_A,\bar\sigma_A)$ and $(\bar X_B,\bar\sigma_B)$ are flow equivalent 
if and only if 
$\BF(A)\cong\BF(B)$ and $\det(\id-A)=\det(\id-B)$. 
\end{theorem}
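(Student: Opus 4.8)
The necessity of the conditions is already at hand: the Bowen--Franks group is a flow equivalence invariant by \cite{BF77Ann}, and $\det(\id-A)$ is one by \cite{PS75Top}, both recalled above. So only the sufficiency direction carries content, and this is where I would concentrate the work.

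For sufficiency the plan is to argue directly with the matrices. The starting point is the combinatorial description of flow equivalence for shifts of finite type: flow equivalence is the equivalence relation on nonnegative integer matrices generated by (i) strong shift equivalence, i.e. topological conjugacy of the associated two-sided shifts, and (ii) the expansion move of Parry--Sullivan, in which a symbol is replaced by a length-two path through a newly inserted symbol. I would record how each move acts on $\id-A$: both strong shift equivalence and expansion leave the pair $(\BF(A),\det(\id-A))$ unchanged, while --- and this is the key technical input --- suitable compositions of expansions and state splittings can realize a large supply of unimodular row and column operations on $\id-A$, at the cost of enlarging the matrix. Assembling these, one obtains that flow equivalence of irreducible shifts is implemented by a controlled class of elementary integer operations on $\id-A$ together with the insertion and deletion of states.

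The heart of the argument is then a normal-form result: every irreducible, non-permutation $0$--$1$ matrix $A$ is flow equivalent to one whose matrix $\id-A$ is in a canonical form depending only on the isomorphism class of $\BF(A)=\Z^N/(\id-A)\Z^N$ and on the sign of $\det(\id-A)$. Concretely, using the operations above I would reduce $\id-A$ toward its Smith normal form, absorbing unit diagonal entries by deleting the corresponding states, and I would carry the sign of the determinant as a separate bookkeeping invariant. Note that once $\BF(A)\cong\BF(B)$ is assumed, the condition $\det(\id-A)=\det(\id-B)$ adds exactly the sign in the finite case (the absolute values then both equal $\lvert\BF(A)\rvert$) and is automatic in the infinite case (both determinants vanish). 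Hence if $\BF(A)\cong\BF(B)$ and $\det(\id-A)=\det(\id-B)$, then $A$ and $B$ reduce to the same canonical form and are flow equivalent through it.

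The main obstacle is precisely this sufficiency step: one must realize purely algebraic reductions of $\id-A$ by \emph{genuine} flow-equivalence moves on shifts, keeping all intermediate matrices nonnegative, irreducible, and of the correct type. Individual unimodular operations are not themselves flow-equivalence moves, so the delicate point is to show that the required composite reductions can always be implemented after enlarging the alphabet by expansions, and that the sign of the determinant is the only obstruction surviving once $\BF(A)$ has been matched. An alternative route, available here since Theorem~\ref{flow=barH} has been recorded, is to pass through the ordered cohomology group: one identifies the underlying group $\bar H^A$ with $\BF(A)$, and shows, using Proposition~\ref{whenpositive} and the behaviour of $\sum_{x\in O}\xi(x)$ over periodic orbits, that the positive cone together with this identification encodes exactly the sign of $\det(\id-A)$; matching $(\BF,\det)$ then matches $(\bar H^A,\bar H^A_+)$, and Boyle--Handelman yields flow equivalence. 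Either way the crux is the same: translating the determinant into combinatorial or order-theoretic data and proving it is the final invariant beyond the Bowen--Franks group.
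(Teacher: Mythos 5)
The paper offers no proof of this statement at all: it is Franks' classification theorem, quoted verbatim from \cite{F84ETDS}, so the only meaningful benchmark is Franks' published argument. Your first route is indeed a faithful paraphrase of that argument's skeleton (the Parry--Sullivan presentation of flow equivalence by strong shift equivalence plus expansion \cite{PS75Top}, invariance of $(\BF(A),\det(\id-A))$, then reduction of $\id-A$ to a normal form determined by the group and the sign). But the decisive step --- showing that enough unimodular row and column operations on $\id-A$ can be realized by genuine flow-equivalence moves while every intermediate matrix stays nonnegative and irreducible, and that the reduction terminates in a canonical form with the sign as the only residual obstruction --- is precisely the content of Franks' paper, and you assert it (``suitable compositions of expansions and state splittings can realize a large supply of unimodular operations'') rather than prove it. As written this is a correct plan with the entire technical heart left open; nothing in the present paper can be used to fill it, since the paper deliberately imports the result.

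Your proposed alternative route through Theorem \ref{flow=barH} has a more serious defect: it is circular relative to the literature. Boyle and Handelman prove \cite[Theorem 1.12]{BH96Israel} in the irreducible case \emph{by} showing that the ordered cohomology group determines, and is determined by, the pair $(\BF(A),\operatorname{sgn}\det(\id-A))$ and then invoking Franks' theorem; so one cannot cite their theorem to establish Franks'. Even treating Theorem \ref{flow=barH} as a black box, your sketch still needs the nontrivial lemma that an abstract isomorphism $\BF(A)\cong\BF(B)$ together with $\det(\id-A)=\det(\id-B)$ can be promoted to an isomorphism of ordered groups $(\bar H^A,\bar H^A_+)\cong(\bar H^B,\bar H^B_+)$; observing via Proposition \ref{whenpositive} that positivity is tested on periodic-orbit sums does not yield this, since you must produce a group isomorphism matching the cones, not merely describe the cones. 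That lemma is comparable in depth to the theorem itself. The necessity direction you dispose of correctly via \cite{BF77Ann} and \cite{PS75Top}.
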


In what follows we consider $\BF(A^t)=\Z^N/(\id-A^t)\Z^N$. 
Although $\BF(A^t)$ is isomorphic to $\BF(A)$ as an abelian group, 
there does not exist a canonical isomorphism between them, 
and so we must distinguish them carefully. 

We denote the equivalence class of $(1,1,\dots,1)\in\Z^N$ in $\BF(A^t)$ 
by $u_A$. 
By \cite[Proposition 3.1]{C81Invent}, 
$K_0(\mathcal{O}_A)$ is isomorphic to $\BF(A^t)$ and 
the class of the unit of $\mathcal{O}_A$ maps to $u_A$ under this isomorphism. 
And $K_1(\mathcal{O}_A)$ is isomorphic to $\Ker(\id-A^t)$ on $\Z^N$. 
In \cite{Matui12PLMS}, 
it has been shown that these groups naturally arise 
from the homology theory of \'etale groupoids. 

Let $G$ be an \'etale groupoid whose unit space $G^{(0)}$ is a Cantor set. 
One can associate the homology groups $H_n(G)$ with $G$ 
(see \cite[Section 3]{Matui12PLMS} for the precise definition). 
The homology group $H_0(G)$ is the quotient of $C(G^{(0)},\Z)$ by the subgroup 
generated by $1_{r(U)}-1_{s(U)}$ for compact open $G$-sets $U$. 
We denote the equivalence class of $\xi\in C(G^{(0)},\Z)$ in $H_0(G)$ 
by $[\xi]$. 
For the \'etale groupoid $G_A$, we have the following. 

\begin{theorem}[{\cite[Theorem 4.14]{Matui12PLMS}}]\label{HofGA}
Let $(X_A,\sigma_A)$ be a one-sided topological Markov shift. 
Then 
\[
H_n(G_A)\cong\begin{cases}\BF(A^t)=\Z^N/(\id-A^t)\Z^N&n=0\\
\Ker(\id-A^t)&n=1\\0&n\geq2. \end{cases}
\]
Moreover, there exists an isomorphism $\Phi:H_0(G_A)\to\BF(A^t)$ 
such that $\Phi([1_{X_A}])=u_A$. 
\end{theorem}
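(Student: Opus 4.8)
The plan is to exploit the canonical $\Z$-grading of $G_A$ and to reduce the computation to an AF subgroupoid by means of a Pimsner--Voiculescu-type long exact sequence. First I would consider the continuous cocycle $c\colon G_A\to\Z$ given by $c(x,n,y)=n$. It is a groupoid homomorphism whose kernel
\[
R_A=c^{-1}(0)=\{(x,0,y)\in G_A\mid \sigma_A^k(x)=\sigma_A^k(y)\text{ for some }k\in\Z_+\}
\]
is the tail (eventual agreement) equivalence relation on $X_A$. This $R_A$ is an AF groupoid, being the increasing union of the compact open principal subgroupoids $R_A^{(k)}=\{(x,0,y)\mid\sigma_A^k(x)=\sigma_A^k(y)\}$. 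Since groupoid homology commutes with such inductive limits and the homology of each finite piece is concentrated in degree $0$, I would obtain $H_n(R_A)=0$ for $n\geq1$. Using the description of $H_0$ as $C(X_A,\Z)$ modulo the relations $1_{r(U)}-1_{s(U)}$, and taking the cylinder-set indicators as generators, I would identify $H_0(R_A)$ with the stationary inductive limit $\varinjlim(\Z^N,A^t)$, the connecting maps being multiplication by $A^t$ (the branching of $\sigma_A$ at each stage producing exactly $A^t$).

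Next I would observe that the shift induces a scaling automorphism $\lambda_*$ of $H_0(R_A)$ which, under the above identification, is the canonical shift of the inductive system, i.e.\ multiplication by $A^t$ on $\varinjlim(\Z^N,A^t)$. The decisive step is then a long exact sequence of the form
\[
\cdots\to H_n(R_A)\xrightarrow{\id-\lambda_*}H_n(R_A)\to H_n(G_A)\to H_{n-1}(R_A)\to\cdots,
\]
coming from the $\Z$-grading. Feeding in $H_n(R_A)=0$ for $n\geq1$, this collapses to $H_n(G_A)=0$ for $n\geq2$ together with the four-term exact sequence
\[
0\to H_1(G_A)\to H_0(R_A)\xrightarrow{\id-\lambda_*}H_0(R_A)\to H_0(G_A)\to0.
\]
A standard telescoping computation shows that $\id-\lambda_*$ on $\varinjlim(\Z^N,A^t)$ has kernel $\Ker(\id-A^t)$ on $\Z^N$ and cokernel $\Z^N/(\id-A^t)\Z^N=\BF(A^t)$, which yields $H_1(G_A)\cong\Ker(\id-A^t)$ and $H_0(G_A)\cong\BF(A^t)$, as claimed.

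I expect the main obstacle to be establishing this long exact sequence. Because $\sigma_A$ is only a local homeomorphism and not invertible, $G_A$ is not literally a semidirect product $R_A\rtimes\Z$, so one must invoke the homological machinery for $\Z$-graded \'etale groupoids (a spectral sequence that degenerates to the above sequence) rather than a naive crossed-product argument; setting up this machinery and verifying that the induced endomorphism on $H_0(R_A)$ is indeed the shift of the inductive system is the technical heart of the proof.

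Finally, for the ``moreover'' statement I would track the class $[1_{X_A}]$ through these identifications. Under the surjection $H_0(R_A)\to H_0(G_A)$ appearing in the four-term sequence, $[1_{X_A}]$ is the image of the class of $1_{X_A}\in C(X_A,\Z)$, which corresponds to the vector $(1,1,\dots,1)\in\Z^N$ at the bottom stage of $\varinjlim(\Z^N,A^t)$. Passing to the cokernel $\Z^N/(\id-A^t)\Z^N=\BF(A^t)$, this vector represents exactly $u_A$, so the resulting isomorphism $\Phi\colon H_0(G_A)\to\BF(A^t)$ satisfies $\Phi([1_{X_A}])=u_A$; verifying that all the identifications are compatible is a routine diagram chase.
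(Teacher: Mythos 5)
Your proposal is correct and takes essentially the same route as the proof in the source this paper cites for the statement (\cite[Theorem 4.14]{Matui12PLMS}, the paper itself giving no proof): there, too, one uses the canonical cocycle $c(x,n,y)=n$, the AF-ness of the degree-zero part (handled via the skew product $G_A\times_c\Z$, which is Morita equivalent to your $R_A$, exactly the technical point you flagged), the identification $H_0\cong\varinjlim(\Z^N,A^t)$, and the Pimsner--Voiculescu/Wang-type exact sequence from the $\Z$-grading, with the same telescoping computation of $\Ker(\id-\lambda_*)$ and $\operatorname{coker}(\id-\lambda_*)$ and the same tracking of $[1_{X_A}]$ to $u_A$. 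Your sketch is sound as it stands, with the acknowledged heart of the matter---establishing the long exact sequence for the $\Z$-graded groupoid---being supplied in the cited reference by the spectral-sequence machinery for skew products rather than any semidirect-product decomposition.
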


In particular, it follows from Theorem \ref{coe=cartaniso} that 
the pair $(\BF(A^t),u_A)$ is an invariant 
for continuous orbit equivalence of one-sided topological Markov shifts 
(see also \cite[Theorem 1.3]{Matsumoto13DCDS}). 
Thus, 
if $(X_A,\sigma_A)$ and $(X_B,\sigma_B)$ are continuously orbit equivalent, 
then there exists an isomorphism $\Phi:\BF(A^t)\to\BF(B^t)$ 
such that $\Phi(u_A)=u_B$.

\section{Classification up to continuous orbit equivalence}

Let $(X_A,\sigma_A)$ be an irreducible one-sided topological Markov shift. 
As in the previous section, $(\bar X_A,\bar\sigma_A)$ denotes 
the two-sided topological Markov shift corresponding to $(X_A,\sigma_A)$. 
Define $\rho:\bar X_A\to X_A$ by $\rho((x_n)_{n\in\Z})=(x_n)_{n\in\N}$. 
Clearly we have $\sigma_A\circ\rho=\rho\circ\bar\sigma_A$. 

\begin{lemma}\label{H=barH}
The map $C(X_A,\Z)\ni\xi\mapsto\xi\circ\rho\in C(\bar X_A,\Z)$ gives rise to 
an isomorphism $\tilde\rho$ from $H^A$ to $\bar H^A$ 
satisfying $\tilde\rho(H^A_+)=\bar H^A_+$. 
\end{lemma}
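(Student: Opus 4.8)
The plan is to show that the map $\xi \mapsto \xi \circ \rho$ descends to a well-defined homomorphism $\tilde\rho : H^A \to \bar H^A$, and then to prove it is an order-isomorphism. Well-definedness is the natural first step: since $\sigma_A \circ \rho = \rho \circ \bar\sigma_A$, a coboundary $\xi - \xi\circ\sigma_A$ pulls back to $\xi\circ\rho - (\xi\circ\sigma_A)\circ\rho = \xi\circ\rho - (\xi\circ\rho)\circ\bar\sigma_A$, which is again a coboundary, so $\tilde\rho$ is a well-defined group homomorphism. Compatibility with the positive cones in the direction $\tilde\rho(H^A_+) \subset \bar H^A_+$ is immediate, since if $\xi \geq 0$ pointwise on $X_A$ then $\xi\circ\rho \geq 0$ pointwise on $\bar X_A$.

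\medskip

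The substantive content is surjectivity of $\tilde\rho$, and here the key observation is that every $\bar\sigma_A$-cohomology class has a representative that factors through finitely many \emph{past} coordinates, hence through $\rho$. Concretely, I would take an arbitrary $\eta \in C(\bar X_A,\Z)$; by continuity and compactness it depends on only finitely many coordinates, say $x_{-m},\dots,x_n$. The idea is to use the coboundary relation to shift the dependence forward: replacing $\eta$ by $\eta - (\zeta - \zeta\circ\bar\sigma_A)$ for a suitable $\zeta$ allows one to eliminate dependence on the negative coordinates one at a time, arriving at a cohomologous function $\eta'$ that depends only on coordinates with nonnegative index. Such an $\eta'$ is constant on the fibers of $\rho$ (two points of $\bar X_A$ with the same forward coordinates agree under $\eta'$), so $\eta' = \xi \circ \rho$ for some $\xi \in C(X_A,\Z)$, giving $\tilde\rho([\xi]) = [\eta]$. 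I expect the bookkeeping of this ``telescoping'' argument, where each step trades one negative coordinate for a shift, to be the main technical obstacle, since one must check that the replacement genuinely reduces the range of coordinates and terminates.

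\medskip

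Injectivity then follows from the reverse inclusion for the cones together with surjectivity, or can be handled directly. For the cone statement $\tilde\rho^{-1}(\bar H^A_+) \subset H^A_+$, the cleanest route is to invoke Proposition \ref{whenpositive}: a class $[\eta]$ lies in $\bar H^A_+$ precisely when $\sum_{x\in O}\eta(x)\geq 0$ for every finite $\bar\sigma_A$-invariant set $O \subset \bar X_A$. The plan is to match finite $\sigma_A$-invariant sets in $X_A$ (the periodic orbits) with finite $\bar\sigma_A$-invariant sets in $\bar X_A$ via $\rho$, noting that $\rho$ restricts to a bijection between periodic points with the same period, and that $\xi\circ\rho$ sums to the same value over corresponding orbits. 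Thus $[\xi\circ\rho] \in \bar H^A_+$ forces every periodic-orbit sum of $\xi$ to be nonnegative, and the one-sided analogue of Proposition \ref{whenpositive} then yields $[\xi] \in H^A_+$. Injectivity of $\tilde\rho$ likewise reduces to the same orbit-sum comparison, since $\tilde\rho([\xi]) = 0$ means $\xi\circ\rho$ is a two-sided coboundary, whence all its periodic sums vanish, forcing $[\xi]=0$ in $H^A$.
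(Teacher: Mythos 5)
Your first half is fine and matches the paper: well-definedness via $\sigma_A\circ\rho=\rho\circ\bar\sigma_A$, and surjectivity by shifting a two-sided function until it depends only on nonnegative coordinates. (Your ``one negative coordinate at a time'' telescoping is an unnecessarily elaborate version of the paper's one-line move: since $\zeta-\zeta\circ\bar\sigma_A^n$ is a telescoping sum of coboundaries, one simply replaces $\zeta$ by $\zeta\circ\bar\sigma_A^n$ for $n$ large, which factors as $\xi\circ\rho$ outright; there is no delicate bookkeeping to worry about.)

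The second half has a genuine gap, and it is a circularity. For the inclusion $\tilde\rho^{-1}(\bar H^A_+)\subset H^A_+$ you invoke ``the one-sided analogue of Proposition \ref{whenpositive}''. But no such analogue is available at this point: Boyle--Handelman's criterion is proved only for homeomorphisms (two-sided shifts), and in the paper the one-sided version is exactly Lemma \ref{whenpositive2}, whose proof \emph{uses} the lemma you are proving --- specifically the equality $\tilde\rho(H^A_+)=\bar H^A_+$ together with injectivity. The hard direction you need (nonnegative sums over all finite $\sigma_A$-invariant sets imply $\xi$ is cohomologous, with a continuous one-sided transfer function, to a nonnegative function) is precisely the nontrivial content; you cannot cite it. The same defect infects your injectivity argument twice over: deducing injectivity from the cone inclusions requires $H^A_+\cap(-H^A_+)=\{0\}$, which is not established, and your ``direct'' route (all periodic sums of $\xi$ vanish, hence $[\xi]=0$ in $H^A$) is an unproven Liv\v{s}ic-type rigidity statement for one-sided shifts, nowhere among the quoted results.

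The fix is that your own surjectivity trick already does everything, with no periodic-orbit input at all. For the cone: a class in $\bar H^A_+$ has a representative $\zeta\geq0$; then $\zeta\circ\bar\sigma_A^n\geq0$ factors as $\xi\circ\rho$, and since $\rho$ is surjective, $\xi\geq0$ pointwise, giving $\bar H^A_+\subset\tilde\rho(H^A_+)$. For injectivity: if $\xi\circ\rho=\zeta-\zeta\circ\bar\sigma_A$, write $\zeta\circ\bar\sigma_A^n=\eta\circ\rho$ for $n$ large; then
\[
\xi\circ\sigma_A^n\circ\rho=\xi\circ\rho\circ\bar\sigma_A^n
=\zeta\circ\bar\sigma_A^n-\zeta\circ\bar\sigma_A^{n+1}
=(\eta-\eta\circ\sigma_A)\circ\rho,
\]
and surjectivity of $\rho$ cancels $\rho$, so $\xi\circ\sigma_A^n=\eta-\eta\circ\sigma_A$ and $[\xi]=[\xi\circ\sigma_A^n]=0$. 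This is the paper's argument; the periodic-orbit matching you describe belongs to the \emph{subsequent} Lemma \ref{whenpositive2}, where it is legitimate because the present lemma is then available.
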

\begin{proof}
For any $\eta\in C(X_A,\Z)$, one has 
$(\eta-\eta\circ\sigma_A)\circ\rho
=\eta\circ\rho-\eta\circ\rho\circ\bar\sigma_A$, 
and so $[\xi]\mapsto[\xi\circ\rho]$ is a well-defined homomorphism 
$\tilde\rho$ from $H^A$ to $\bar H^A$. 

Let $\zeta\in C(\bar X_A,\Z)$. 
Then $\zeta(x)$ depends only on finitely many coordinates of $x\in\bar X_A$. 
Hence, for sufficiently large $n\in\N$, 
there exists $\xi\in C(X_A,\Z)$ 
such that $\zeta\circ\bar\sigma_A^n=\xi\circ\rho$. 
Thus $\tilde\rho$ is surjective. 

Clearly $\tilde\rho(H^A_+)\subset\bar H^A_+$. 
It follows from the argument above that 
$\bar H^A_+$ is contained in $\tilde\rho(H^A_+)$. 

It remains for us to show the injectivity. 
Let $\xi\in C(X_A,\Z)$. 
Suppose that there exists $\zeta\in C(\bar X_A,\Z)$ 
such that $\xi\circ\rho=\zeta-\zeta\circ\bar\sigma_A$. 
In the same way as above, for sufficiently large $n\in\N$, 
there exists $\eta\in C(X_A,\Z)$ 
such that $\zeta\circ\bar\sigma_A^n=\eta\circ\rho$. 
Then 
\[
\xi\circ\sigma_A^n\circ\rho=\xi\circ\rho\circ\bar\sigma_A^n
=\zeta\circ\bar\sigma_A^n-\zeta\circ\bar\sigma_A^{n+1}
=(\eta-\eta\circ\sigma_A)\circ\rho. 
\]
Hence $\xi\circ\sigma_A^n=\eta-\eta\circ\sigma_A$. 
Thus $[\xi]=[\xi\circ\sigma_A^n]=0$ in $H^A$. 
\end{proof}

\begin{lemma}\label{whenpositive2}
For $\xi\in C(X_A,\Z)$, 
$[\xi]$ is in $H^A_+$ if and only if 
$\sum_{x\in O}\xi(x)\geq0$ holds 
for every finite $\sigma_A$-invariant set $O\subset X_A$. 
\end{lemma}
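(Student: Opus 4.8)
The plan is to deduce this one-sided statement from its two-sided counterpart, Proposition~\ref{whenpositive}, by transporting everything through the isomorphism $\tilde\rho\colon H^A\to\bar H^A$ furnished by Lemma~\ref{H=barH}. Since $\tilde\rho$ is an isomorphism carrying $H^A_+$ onto $\bar H^A_+$, one has $[\xi]\in H^A_+$ if and only if $\tilde\rho([\xi])=[\xi\circ\rho]\in\bar H^A_+$. By Proposition~\ref{whenpositive}, the latter holds precisely when $\sum_{\bar x\in\bar O}\xi(\rho(\bar x))\geq0$ for every finite $\bar\sigma_A$-invariant set $\bar O\subset\bar X_A$. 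Thus the whole lemma reduces to showing that this family of inequalities, indexed by finite $\bar\sigma_A$-invariant sets, coincides with the family $\sum_{x\in O}\xi(x)\geq0$ indexed by finite $\sigma_A$-invariant sets $O\subset X_A$.

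To match the two families, I would first observe that a finite $\sigma_A$-invariant set $O$ satisfies $\sigma_A(O)=O$, so $\sigma_A|_O$ is a surjection of a finite set and hence a bijection; consequently $O$ is a finite union of periodic $\sigma_A$-orbits, and the same holds for finite $\bar\sigma_A$-invariant sets in $\bar X_A$. The key is then the correspondence $\bar O\mapsto\rho(\bar O)$. Given a periodic point $x\in X_A$ with $\sigma_A^p(x)=x$, the relation $x_{n+p}=x_n$ for $n\in\N$ lets us extend $x$ uniquely to a bi-infinite $p$-periodic point $\bar x\in\bar X_A$ with $\rho(\bar x)=x$; running this over all the periodic orbits composing a finite $\sigma_A$-invariant set produces a map inverse to $\bar O\mapsto\rho(\bar O)$ on the level of finite invariant sets.

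The point requiring care is that $\rho$ restricts to a \emph{bijection} on each periodic orbit, so that the two sums agree term by term. If $\bar O$ is a single $\bar\sigma_A$-orbit of cardinality $p$, its image $\rho(\bar O)$ is the $\sigma_A$-orbit of $x=\rho(\bar x)$; were $\sigma_A^i(x)=\sigma_A^j(x)$ for some $0\leq i<j<p$, then $x$ would be $(j-i)$-periodic as a one-sided sequence, forcing its unique bi-infinite extension $\bar x$ to have period $j-i<p$, a contradiction. Hence $\rho|_{\bar O}$ is a bijection onto $\rho(\bar O)$ and $\sum_{\bar x\in\bar O}\xi(\rho(\bar x))=\sum_{x\in\rho(\bar O)}\xi(x)$. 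Summing over the orbits composing an arbitrary finite invariant set then gives the equality of the two families of sums required by the reduction above. I expect this orbit-level bookkeeping—confirming that $\rho$ neither collapses nor creates periodic orbits and preserves their cardinalities—to be the only real obstacle, the remaining steps being formal consequences of Lemma~\ref{H=barH} and Proposition~\ref{whenpositive}.
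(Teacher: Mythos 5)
Your proposal is correct and follows essentially the same route as the paper: transport the question through the isomorphism $\tilde\rho$ of Lemma~\ref{H=barH}, apply Proposition~\ref{whenpositive}, and match finite $\bar\sigma_A$-invariant sets with finite $\sigma_A$-invariant sets via $\rho$. The only difference is that the paper simply asserts the existence of a finite $\bar\sigma_A$-invariant $\bar O$ with $\rho|\bar O$ a bijection onto a given $O$ (and the injectivity of $\rho|\bar O$ in the converse direction), whereas you verify this orbit-level bookkeeping explicitly via unique periodic extensions---a harmless elaboration of the same argument.
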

\begin{proof}
Suppose that $[\xi]$ is in $H^A_+$. 
By the lemma above, $\tilde\rho([\xi])=[\xi\circ\rho]$ is in $\bar H^A_+$. 
Let $O\subset X_A$ be a finite $\sigma_A$-invariant set. 
There exists a finite $\bar\sigma_A$-invariant set $\bar O\subset\bar X_A$ 
such that $\rho|\bar O$ is a bijection from $\bar O$ to $O$. 
It follows from Proposition \ref{whenpositive} that 
$\sum_{x\in\bar O}\xi(\rho(x))\geq0$. 
Hence $\sum_{x\in O}\xi(x)\geq0$. 

Suppose that $\sum_{x\in O}\xi(x)\geq0$ holds 
for every finite $\sigma_A$-invariant set $O\subset X_A$. 
For any finite $\bar\sigma_A$-invariant set $\bar O\subset\bar X_A$, 
$O=\rho(\bar O)\subset X_A$ is a finite $\sigma_A$-invariant set and 
$\rho|\bar O$ is injective. 
Therefore $\sum_{x\in\bar O}\xi(\rho(x))=\sum_{x\in O}\xi(x)\geq0$. 
By Proposition \ref{whenpositive}, $[\xi\circ\rho]$ is in $\bar H^A_+$. 
By the lemma above, $[\xi]$ is in $H^A_+$ as desired. 
\end{proof}

Let $G$ be an \'etale groupoid. 
We denote by $\Hom(G,\Z)$ the set of continuous homomorphisms $\omega:G\to\Z$. 
We think of $\Hom(G,\Z)$ as an abelian group by pointwise addition. 
For $\xi\in C(G^{(0)},\Z)$, we can define $\partial(\xi)\in\Hom(G,\Z)$ 
by $\partial(\xi)(g)=\xi(r(g))-\xi(s(g))$. 
The cohomology group $H^1(G)=H^1(G,\Z)$ is the quotient of $\Hom(G,\Z)$ 
by $\{\partial(\xi)\mid\xi\in C(G^{(0)},\Z)\}$. 
The equivalence class of $\omega:G\to\Z$ is written $[\omega]\in H^1(G)$. 

Let $g\in G$ be such that $r(g)=s(g)$, that is, $g\in G'$. 
Since $\partial(\xi)(g)=0$ for any $\xi\in C(G^{(0)},\Z)$, 
$[\omega]\mapsto\omega(g)$ is a well-defined homomorphism 
from $H^1(G)$ to $\Z$. 
We say that $g$ is attracting if there exists a compact open $G$-set $U$ 
such that $g\in U$, $r(U)\subset s(U)$ and 
\[
\lim_{n\to+\infty}(\pi_U)^n(y)=r(g)
\]
holds for any $y\in s(U)$. 

Let $(X_A,\sigma_A)$ be a one-sided topological Markov shift and 
consider the \'etale groupoid $G_A$ (see Section 2.2 for the definition). 
We say that $x\in X_A$ is eventually periodic 
if there exist $k,l\in\Z_+$ 
such that $k\neq l$ and $\sigma_A^k(x)=\sigma_A^l(x)$. 
This is equivalent to saying that 
$\{\sigma_A^n(x)\in X_A\mid n\in\Z_+\}$ is a finite set. 
When $x$ is eventually periodic, we call 
\[
\min\left\{k-l\mid k,l\in\Z_+,\ k>l,\ \sigma_A^k(x)=\sigma_A^l(x)\right\}
\]
the period of $x$. 

\begin{lemma}
Let $x\in X_A$. 
\begin{enumerate}
\item If $x$ is not eventually periodic, then 
the isotropy group $(G_A)_x$ is trivial. 
\item If $x$ is eventually periodic, 
then $(G_A)_x=\{(x,np,x)\in G_A\mid n\in\Z\}\cong\Z$, 
where $p$ is the period of $x$. 
\item When $x$ is eventually periodic and has period $p$, 
$(x,np,x)$ is attracting if and only if $n$ is positive. 
\end{enumerate}
\end{lemma}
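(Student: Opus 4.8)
The plan is to analyze the structure of the isotropy group $(G_A)_x = r^{-1}(x)\cap s^{-1}(x)$ directly from the definition of $G_A$. An element of $(G_A)_x$ has the form $(x,n,x)$ where $n=k-l$ for some $k,l\in\Z_+$ with $\sigma_A^k(x)=\sigma_A^l(x)$. So the key observation is that the integers $n$ arising as $(x,n,x)\in(G_A)_x$ form a subgroup of $\Z$, and I must identify this subgroup in each case.

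First I would prove (1) and (2) together by examining when $\sigma_A^k(x)=\sigma_A^l(x)$ can hold. If $x$ is \emph{not} eventually periodic, then the points $\sigma_A^m(x)$ for $m\in\Z_+$ are all distinct, so $\sigma_A^k(x)=\sigma_A^l(x)$ forces $k=l$, hence $n=0$; this gives $(G_A)_x=\{(x,0,x)\}$, the trivial group. If $x$ \emph{is} eventually periodic with period $p$, let $m_0$ be the smallest index such that $\sigma_A^{m_0}(x)$ lies on the periodic cycle. For $k,l\ge m_0$ one has $\sigma_A^k(x)=\sigma_A^l(x)$ if and only if $k\equiv l\pmod p$, so $k-l$ ranges exactly over $p\Z$. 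One must check that no smaller-index relations produce elements outside $p\Z$: if $\sigma_A^k(x)=\sigma_A^l(x)$ with say $k<m_0$, then applying $\sigma_A$ repeatedly to push both indices above $m_0$ shows $k-l$ is still a multiple of $p$. This yields $(G_A)_x=\{(x,np,x)\mid n\in\Z\}$, and the map $n\mapsto(x,np,x)$ is a group isomorphism onto $\Z$ since composition adds the middle coordinates.

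For (3), I would unwind the definition of \emph{attracting}. Suppose $x$ is eventually periodic with period $p$ and consider $g=(x,np,x)$. I would construct an explicit compact open $G_A$-set $U$ containing $g$ and determine the local homeomorphism $\pi_U$. The natural choice uses a cylinder neighborhood $V$ of $x$ together with the relation $\sigma_A^{k}=\sigma_A^{l}$ with $k-l=np$; concretely, for $n>0$ one can take $k=m_0+np$ and $l=m_0$ so that $\pi_U$ acts like applying $\sigma_A^{np}$ followed by the inverse branch returning into $V$. The point is that when $n>0$, iterating $\pi_U$ drives nearby points toward the periodic orbit of $x$ and in fact toward $r(g)=x$ itself, giving the required limit $\lim_{n\to+\infty}(\pi_U)^n(y)=r(g)$; this reflects that moving forward along the shift on the finite-to-one local system contracts onto the cycle. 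Conversely for $n<0$ the roles reverse and the same $G_A$-set (or its inverse) expands rather than contracts, so no contracting $U$ exists and $g$ is not attracting; the case $n=0$ is the unit and is excluded from being attracting by the requirement $k>l$ implicit in a genuine contraction.

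The main obstacle I anticipate is part (3): producing the compact open $G_A$-set $U$ with the precise contraction property and verifying the limit rigorously. One must be careful that $r(U)\subset s(U)$ and that $\pi_U$ is genuinely defined on all of $s(U)$, which requires choosing the cylinder sets at the right length so that the forward shift relation is realized by a continuous inverse branch. Checking that the iterates converge uniformly to $x$—rather than merely into the periodic orbit—will require using that the cylinder around $x$ can be taken arbitrarily small and that the period-$p$ return map is a local contraction at $x$ in the natural metric on $X_A$. Distinguishing clearly why $n>0$ contracts while $n<0$ does not is the conceptual heart of the argument and where I would focus the most care.
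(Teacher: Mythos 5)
Your parts (1) and (2) are fine (the paper simply declares them obvious, so your more detailed argument via the subgroup $\{n\in\Z\mid(x,n,x)\in G_A\}\subset\Z$ is harmless), and your ``if'' direction of (3) is essentially the paper's proof: for $n>0$ one picks $k,l\in\Z_+$ with $k-l=np$ and $\sigma_A^k(x)=\sigma_A^l(x)$, cylinder sets $V\subset W$ of lengths $k+1$ and $l+1$, and the basic compact open $G_A$-set $U$ with $r(U)=V$, $s(U)=W$ and $\pi_U=(\sigma_A^k|V)^{-1}\circ(\sigma_A^l|W)$; convergence of the iterates is immediate because each application of $\pi_U$ extends the agreement with $x$ by $k-l$ symbols, so your worry about proving the limit rigorously is unfounded.

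The genuine gap is in the ``only if'' direction. ``Attracting'' is an existential condition, so to show $(x,np,x)$ is not attracting for $n\le 0$ you must rule out \emph{every} compact open $G_A$-set $U$ containing it, whereas your sketch only analyzes ``the same $G_A$-set (or its inverse)'', i.e.\ the one natural bisection; and your dismissal of $n=0$ (``excluded by the requirement $k>l$ implicit in a genuine contraction'') is not an argument at all. The missing idea is the local rigidity of \'etale groupoids: if $g$ lies in two open $G$-sets $U,U'$, then $U\cap U'$ is an open $G$-set containing $g$, so $\pi_U$ and $\pi_{U'}$ agree on a neighborhood of $s(g)$. The paper exploits exactly this. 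For $n=0$: any compact open $U\ni(x,0,x)$ satisfies $\pi_U(y)=y$ for all $y$ sufficiently close to $x$ (the unique element of $U$ with source $y$ is $(y,0,y)$), and since $X_A$ has no isolated points there is $y\ne x$ in $s(U)$ whose orbit is constant, hence does not converge to $x$. For $n<0$: given an \emph{arbitrary} compact open $U\ni(x,np,x)$, the set $U^{-1}$ contains $(x,-np,x)$, which is attracting by the case already proved; this yields a clopen neighborhood $V$ of $x$ with $V\subset s(U)$ and $V\subset\pi_U(V)$, so near $x$ the map $\pi_U$ is the inverse of a strict contraction onto $x$ and strictly decreases the depth of agreement with $x$ of any point $y\ne x$. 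Consequently no forward $\pi_U$-orbit of such a $y$ can eventually remain inside a small cylinder neighborhood of $x$ (note that $\lim_m(\pi_U)^m(y)=x$ demands eventual permanence in every neighborhood, not mere recurrence). Without the rigidity observation, your ``roles reverse'' heuristic does not exclude some other, cleverly chosen $U$ from witnessing attraction---which is precisely the point you yourself flagged as the conceptual heart but left unresolved.
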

\begin{proof}
(1) and (2) are obvious. 
We prove (3). 
Suppose that $x$ is an eventually periodic point whose period is $p$. 
Let $(x,np,x)\in (G_A)_x$. 
Assume that $n$ is positive. 
Choose $k,l\in\Z_+$ so that $\sigma_A^k(x)=\sigma_A^l(x)$ and $pn=k-l$. 
Define a clopen neighborhood $V$ and $W$ of $x$ by 
\[
V=\{(y_n)_n\in X_A\mid y_i=x_i\quad\forall i=1,2,\dots,k{+}1\}
\]
and 
\[
W=\{(y_n)_n\in X_A\mid y_i=x_i\quad\forall i=1,2,\dots,l{+}1\}. 
\]
We have $V\subset W$ and $\sigma_A^k(V)=\sigma_A^l(W)$. 
Then 
\[
U=\left\{(y,np,z)\in G_A
\mid y\in V,\ z\in W,\ \sigma_A^k(y)=\sigma_A^l(z)\right\}
\]
is a compact open $G_A$-set 
such that $(x,np,x)\in U$, $r(U)=V$, $s(U)=W$ and 
$\pi_U=(\sigma_A^k|V)^{-1}\circ(\sigma_A^l|W)$. 
It is easy to see that 
\[
\lim_{m\to+\infty}(\pi_U)^m(z)=x
\]
holds for any $z\in s(U)$. 
Thus $(x,np,x)$ is attracting. 

Suppose that 
$U\subset G_A$ is a compact open $G_A$-set containing $(x,0,x)$. 
Then $\pi_U(y)=y$ for any $y$ sufficiently close to $x$, 
and so $(x,0,x)$ is not attracting. 

Assume that $n$ is negative. 
Let $U\subset G_A$ be a compact open $G_A$-set containing $(x,np,x)$. 
By the argument above, $(x,-np,x)$ is attracting. 
Hence there exists a clopen neighborhood $V$ of $x$ 
such that $V\subset s(U)$ and $V\subset\pi_U(V)$. 
This means that $(x,np,x)$ cannot be an attracting element. 
\end{proof}

\begin{proposition}
There exists an isomorphism $\Phi:H^1(G_A)\to H^A$ such that 
$\Phi([\omega])$ is in $H^A_+$ if and only if 
$\omega(g)\geq0$ for every attracting $g\in G_A$. 
\end{proposition}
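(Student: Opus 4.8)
The plan is to produce $\Phi$ together with an explicit inverse, and then read off the positivity characterization from Lemma~\ref{whenpositive2}. Since $(x,1,\sigma_A(x))\in G_A$ for every $x\in X_A$ (take $k=1$, $l=0$ in the definition of $G_A$) and $x\mapsto(x,1,\sigma_A(x))$ is a continuous map $X_A\to G_A$, for $\omega\in\Hom(G_A,\Z)$ I would set $f_\omega(x)=\omega(x,1,\sigma_A(x))\in C(X_A,\Z)$ and define $\Phi([\omega])=[f_\omega]\in H^A$. In the opposite direction, for $f\in C(X_A,\Z)$ I would define $\omega_f\colon G_A\to\Z$ by
\[
\omega_f(x,k-l,y)=\sum_{i=0}^{k-1}f(\sigma_A^i(x))-\sum_{j=0}^{l-1}f(\sigma_A^j(y))
\qquad\text{when }\sigma_A^k(x)=\sigma_A^l(y),
\]
and put $\Psi([f])=[\omega_f]$.

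The first task is to check that these are well defined. I would verify that $\omega_f$ is independent of the choice of $(k,l)$ by noting that replacing $(k,l)$ with $(k+m,l+m)$ adds the two equal blocks $\sum_{t=0}^{m-1}f(\sigma_A^t(\sigma_A^k(x)))$ and $\sum_{t=0}^{m-1}f(\sigma_A^t(\sigma_A^l(y)))$, which cancel; the homomorphism property for a composable pair $(x,n,y),(y,n',z)$ follows by combining $\sigma_A^k(x)=\sigma_A^l(y)$ and $\sigma_A^{k'}(y)=\sigma_A^{l'}(z)$ into $\sigma_A^{k+k'}(x)=\sigma_A^{l+l'}(z)$ and rearranging the resulting telescoping sums. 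Continuity of $\omega_f$ is immediate because $k,l$ are locally constant on the generating open sets. Passing to cohomology then rests on the two telescoping identities
\[
\omega_{\xi-\xi\circ\sigma_A}=\partial(\xi)
\qquad\text{and}\qquad
f_{\partial(\xi)}=\xi-\xi\circ\sigma_A,
\]
valid for $\xi\in C(X_A,\Z)$, which show that $\Psi$ sends coboundaries to coboundaries and that $\Phi$ kills coboundaries.

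That $\Phi$ and $\Psi$ are mutually inverse hinges on a single summation identity. Writing $(x,k,\sigma_A^k(x))$ as the product $(x,1,\sigma_A(x))\cdots(\sigma_A^{k-1}(x),1,\sigma_A^k(x))$ and using that $\omega$ is a homomorphism gives
\[
\omega(x,k,\sigma_A^k(x))=\sum_{i=0}^{k-1}f_\omega(\sigma_A^i(x)).
\]
Combining this with the factorization $(x,n,y)=(x,k,\sigma_A^k(x))\cdot(y,l,\sigma_A^l(y))^{-1}$ (valid since $\sigma_A^k(x)=\sigma_A^l(y)$) yields $\omega_{f_\omega}=\omega$, while the case $k=1,l=0$ gives $f_{\omega_f}=f$. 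Hence $\Phi$ is an isomorphism with inverse $\Psi$.

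For the positivity statement I would feed $[f_\omega]$ into Lemma~\ref{whenpositive2}: $\Phi([\omega])=[f_\omega]$ lies in $H^A_+$ if and only if $\sum_{x\in O}f_\omega(x)\ge0$ for every finite $\sigma_A$-invariant set $O$. Each such $O$ is a disjoint union of periodic orbits, and for the orbit of a point $z$ of least period $p$ the element $(z,p,z)$ is attracting and satisfies $\omega(z,p,z)=\sum_{x'\in O}f_\omega(x')$ by the summation identity above. Conversely, by the preceding lemma every attracting $g$ has the form $(x,np,x)$ with $x$ eventually periodic of period $p$ and $n>0$; applying the summation identity after shifting $x$ into its terminal periodic orbit $O$ gives $\omega(g)=n\sum_{x'\in O}f_\omega(x')$. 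Thus the values of $\omega$ on attracting elements are precisely the positive-integer multiples of the orbit sums, so $\omega(g)\ge0$ for all attracting $g$ if and only if every orbit sum is nonnegative, which by the criterion above is equivalent to $[f_\omega]\in H^A_+$. I expect this last paragraph to be the main obstacle: the whole equivalence turns on matching attracting isotropy elements with positive windings around periodic orbits exactly, and this depends on the structural description of $(G_A)_x$ and of attracting elements from the preceding lemma together with Lemma~\ref{whenpositive2}.
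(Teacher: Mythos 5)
Your proposal is correct and follows essentially the same route as the paper: you define $\Phi$ via $\omega\mapsto\omega(\,\cdot\,,1,\sigma_A(\,\cdot\,))$ with the same explicit inverse formula $\omega_f(x,k-l,y)=\sum_{i=0}^{k-1}f(\sigma_A^i(x))-\sum_{j=0}^{l-1}f(\sigma_A^j(y))$, and you obtain the positivity characterization by combining the classification of attracting elements $(x,np,x)$, $n>0$, with Lemma~\ref{whenpositive2}, exactly as the paper does. If anything, your writeup is slightly more explicit than the paper's on the final equivalence (the decomposition of finite $\sigma_A$-invariant sets into periodic orbits and the matching of orbit sums with values on attracting elements in both directions), which the paper leaves largely implicit.
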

\begin{proof}
Let $\omega\in\Hom(G_A,\Z)$. 
Define $\xi\in C(X_A,\Z)$ by 
\[
\xi(x)=\omega((x,1,\sigma_A(x))). 
\]
Let us verify that the map $\omega\mapsto\xi$ is surjective. 
For a given $\xi\in C(X_A,\Z)$, we can define $\omega\in\Hom(G_A,\Z)$ 
as follows. 
Take $(x,n,y)\in G_A$. 
There exists $k,l\in\Z_+$ 
such that $k-l=n$ and $\sigma_A^k(x)=\sigma_A^l(y)$. 
Put 
\[
\omega((x,n,y))
=\sum_{i=0}^{k-1}\xi(\sigma_A^i(x))-\sum_{j=0}^{l-1}\xi(\sigma_A^j(y)). 
\]
Clearly this gives a well-defined continuous homomorphism 
from $G_A$ to $\Z$. 
If there exists $\eta\in C(X_A,\Z)$ such that $\omega=\partial(\eta)$, 
then $\xi=\eta-\eta\circ\sigma_A$, that is, $[\xi]=0$ in $H^A$. 
It is also easy to see that the converse holds. 
Therefore $\Phi:[\omega]\mapsto[\xi]$ is an isomorphism 
from $H^1(G_A)$ to $H^A$. 

We would like to show that $[\xi]$ is in $H^A_+$ 
if and only if $\omega(g)\geq0$ for every attracting $g\in G_A$. 
Let $x\in X_A$ be an eventually periodic point whose period is $p$ 
and let $g=(x,np,x)$ be an attracting element. 
By the lemma above, $n$ is positive. 
There exists $k,l\in\Z_+$ 
such that $k-l=np$ and $\sigma_A^k(x)=\sigma_A^l(x)$. 
Then one has 
\begin{align*}
\omega(g)
&=\sum_{i=0}^{k-1}\xi(\sigma_A^i(x))-\sum_{j=0}^{l-1}\xi(\sigma_A^j(x))\\
&=\sum_{i=l}^{k-1}\xi(\sigma_A^i(x))\\
&=n\sum_{i=l}^{l+p-1}\xi(\sigma_A^i(x)). 
\end{align*}
Notice that $O=\{\sigma_A^l(x),\sigma_A^{l+1}(x),\dots,\sigma_A^{l+p-1}(x)\}$ 
is a finite $\sigma_A$-invariant set. 
By Lemma \ref{whenpositive2}, 
$[\xi]$ belongs to $H^A_+$ if and only if 
\[
\sum_{y\in O}\xi(y)\geq0
\]
for any finite $\sigma_A$-invariant set $O\subset X_A$, 
thereby completing the proof. 
\end{proof}

Consequently we have the following. 

\begin{theorem}\label{coe>fe}
Let $(X_A,\sigma_A)$ and $(X_B,\sigma_B)$ be 
two irreducible one-sided topological Markov shifts. 
If $(X_A,\sigma_A)$ is continuously orbit equivalent to $(X_B,\sigma_B)$, 
then there exists an isomorphism $\Phi:H^A\to H^B$ 
such that $\Phi(H^A_+)=H^B_+$. 
In particular, 
$(\bar X_A,\bar\sigma_A)$ is flow equivalent to $(\bar X_B,\bar\sigma_B)$. 
\end{theorem}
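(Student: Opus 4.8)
The plan is to transport the ordered cohomology group across the groupoid isomorphism provided by continuous orbit equivalence, using the preceding Proposition as the essential bridge between the order structure on $H^A$ and the groupoid structure of $G_A$. First I would apply Theorem~\ref{coe=cartaniso}: continuous orbit equivalence of $(X_A,\sigma_A)$ and $(X_B,\sigma_B)$ yields an isomorphism of \'etale groupoids $\kappa\colon G_A\to G_B$. Since $H^1(G)$ is built functorially from $\Hom(G,\Z)$ modulo coboundaries, $\kappa$ induces an isomorphism $\kappa^*\colon H^1(G_B)\to H^1(G_A)$ sending $[\omega]$ to $[\omega\circ\kappa]$, with inverse $[\omega]\mapsto[\omega\circ\kappa^{-1}]$.

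The crucial observation is that the notion of an attracting element is purely groupoid-theoretic: it is defined only in terms of compact open $G$-sets, the associated homeomorphisms $\pi_U$, and a limit taken in the unit space. Consequently $\kappa$ maps attracting elements of $G_A$ onto attracting elements of $G_B$, and $\kappa^{-1}$ does the reverse. Concretely, $\kappa$ carries a compact open $G_A$-set $U$ witnessing that $g$ is attracting to a compact open $G_B$-set $\kappa(U)$ witnessing that $\kappa(g)$ is attracting, since $\pi_{\kappa(U)}$ is the conjugate of $\pi_U$ by the homeomorphism $\kappa|_{G_A^{(0)}}$, and this conjugation transports the defining limit.

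With this in hand I would combine the isomorphisms $\Phi_A\colon H^1(G_A)\to H^A$ and $\Phi_B\colon H^1(G_B)\to H^B$ from the preceding Proposition and set $\Phi=\Phi_B\circ(\kappa^*)^{-1}\circ\Phi_A^{-1}\colon H^A\to H^B$. To check $\Phi(H^A_+)=H^B_+$, take $[\xi]\in H^A_+$ and write $[\xi]=\Phi_A([\omega])$; by the Proposition, $\omega(g)\geq0$ for every attracting $g\in G_A$. Then $(\kappa^*)^{-1}([\omega])=[\omega\circ\kappa^{-1}]$, and for every attracting $h\in G_B$ the element $\kappa^{-1}(h)$ is attracting, so $(\omega\circ\kappa^{-1})(h)=\omega(\kappa^{-1}(h))\geq0$. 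Applying the Proposition for $B$ gives $\Phi([\xi])\in H^B_+$, and the reverse inclusion is symmetric. Finally, the ``in particular'' clause follows by chaining $\Phi$ with the order isomorphisms $\tilde\rho$ of Lemma~\ref{H=barH} for $A$ and $B$, producing an order isomorphism $(\bar H^A,\bar H^A_+)\cong(\bar H^B,\bar H^B_+)$; flow equivalence is then immediate from Theorem~\ref{flow=barH}.

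The genuinely substantive content---that the positive cone $H^A_+$ can be detected by a condition invariant under groupoid isomorphism---has already been isolated in the preceding Proposition, so I expect the only delicate point here to be the verification that $\kappa$ preserves attracting elements, i.e.\ confirming that the definition of ``attracting'' refers to nothing beyond the groupoid structure. Everything else is the functoriality of $H^1$ together with a routine composition of order isomorphisms.
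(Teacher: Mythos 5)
Your proposal is correct and follows essentially the same route as the paper: invoke Theorem~\ref{coe=cartaniso} to obtain a groupoid isomorphism, observe that attracting elements are preserved because the notion is defined purely in terms of the topological groupoid structure, transport the order via the preceding Proposition's characterization of $H^A_+$, and conclude flow equivalence via Lemma~\ref{H=barH} and Theorem~\ref{flow=barH}. You merely spell out explicitly (the functoriality of $H^1$ and the conjugation of $\pi_U$ by the unit-space homeomorphism) what the paper's proof leaves implicit.
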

\begin{proof}
Consider the \'etale groupoids $G_A$ and $G_B$. 
By Theorem \ref{coe=cartaniso}, $G_A$ and $G_B$ are isomorphic. 
Let $\phi:G_A\to G_B$ be an isomorphism. 
For $g\in G_A$, 
$g$ is attracting in $G_A$ if and only if $\phi(g)$ is attracting in $G_B$. 
It follows from the proposition above that 
$(H^A,H^A_+)$ is isomorphic to $(H^B,H^B_+)$. 
Then, Lemma \ref{H=barH} implies that 
$(\bar H^A,\bar H^A_+)$ is isomorphic to $(\bar H^B,\bar H^B_+)$. 
By Theorem \ref{flow=barH}, 
$(\bar X_A,\bar\sigma_A)$ is flow equivalent to $(\bar X_B,\bar\sigma_B)$. 
\end{proof}

\begin{theorem}\label{classification}
Let $(X_A,\sigma_A)$ and $(X_B,\sigma_B)$ be 
two irreducible one-sided topological Markov shifts. 
The following conditions are equivalent. 
\begin{enumerate}
\item $(X_A,\sigma_A)$ and $(X_B,\sigma_B)$ are 
continuously orbit equivalent. 
\item The \'etale groupoids $G_A$ and $G_B$ are isomorphic. 
\item There exists an isomorphism $\Psi:\mathcal{O}_A\to\mathcal{O}_B$ 
such that $\Psi(\mathcal{D}_A)=\mathcal{D}_B$. 
\item $\mathcal{O}_A$ is isomorphic to $\mathcal{O}_B$ 
and $\det(\id-A)=\det(\id-B)$. 
\item There exists an isomorphism $\Phi:\BF(A^t)\to\BF(B^t)$ 
such that $\Phi(u_A)=u_B$ and $\det(\id-A)=\det(\id-B)$. 
\end{enumerate}
\end{theorem}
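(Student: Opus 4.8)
The plan is to close the cycle of implications $(1)\Rightarrow(5)\Rightarrow(4)\Rightarrow(3)\Rightarrow(1)$. The equivalences $(1)\Leftrightarrow(2)\Leftrightarrow(3)$ are already furnished by Theorem \ref{coe=cartaniso}, so I would take those as given and concentrate on splicing $(4)$ and $(5)$ into the established chain by proving the three new implications $(1)\Rightarrow(5)$, $(5)\Rightarrow(4)$ and $(4)\Rightarrow(3)$; together with $(3)\Rightarrow(1)$ from Theorem \ref{coe=cartaniso} this makes all five conditions equivalent.

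For $(1)\Rightarrow(5)$ I would argue in two halves. Assuming continuous orbit equivalence, Theorem \ref{coe>fe} gives that $(\bar X_A,\bar\sigma_A)$ and $(\bar X_B,\bar\sigma_B)$ are flow equivalent, whence the Parry--Sullivan invariance of the determinant (recorded in Theorem \ref{flow=BF}, cf.\ \cite{PS75Top}) yields $\det(\id-A)=\det(\id-B)$. For the pointed Bowen--Franks datum I would instead use Theorem \ref{coe=cartaniso} to obtain an isomorphism $G_A\cong G_B$, which induces an isomorphism $H_0(G_A)\cong H_0(G_B)$ carrying $[1_{X_A}]$ to $[1_{X_B}]$; composing with the identifications of Theorem \ref{HofGA} produces an isomorphism $\Phi:\BF(A^t)\to\BF(B^t)$ with $\Phi(u_A)=u_B$. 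These two halves together are exactly $(5)$.

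For $(5)\Rightarrow(4)$, by \cite{C81Invent} the pointed group $(\BF(A^t),u_A)$ is naturally identified with $(K_0(\mathcal{O}_A),[1_{\mathcal{O}_A}])$, so the hypothesis furnishes an isomorphism $(K_0(\mathcal{O}_A),[1_{\mathcal{O}_A}])\cong(K_0(\mathcal{O}_B),[1_{\mathcal{O}_B}])$. Under the standing assumptions (irreducible, condition (I), $N>1$, which in particular rules out permutation matrices) both $\mathcal{O}_A$ and $\mathcal{O}_B$ are unital, purely infinite and simple, so R{\o}rdam's classification of Cuntz--Krieger algebras \cite{R95K} applies and gives $\mathcal{O}_A\cong\mathcal{O}_B$; combined with the determinant equality already present in $(5)$ this is $(4)$. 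Finally $(4)\Rightarrow(3)$ is precisely the theorem of \cite{Matsumoto13PAMS}, so no further work is required there.

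The conceptual heart of the matter, namely that continuous orbit equivalence forces flow equivalence, has already been isolated in Theorem \ref{coe>fe}, so the present proof is essentially an assembly of cited results. The one step I expect to require genuine care is $(5)\Rightarrow(4)$: I must be sure that Cuntz--Krieger algebras are classified by the single pointed invariant $(K_0,[1])$ rather than by the full triple $(K_0,[1],K_1)$ of the general Kirchberg--Phillips theorem. This is exactly the refinement supplied by R{\o}rdam, and it hinges on the special form of the $K$-groups of $\mathcal{O}_A$ --- the group $K_1(\mathcal{O}_A)\cong\Ker(\id-A^t)$ is redundant because its rank coincides with the rank of the free part of $K_0(\mathcal{O}_A)\cong\BF(A^t)$, both being $N-\mathrm{rank}(\id-A^t)$.
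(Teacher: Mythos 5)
Your proposal is correct, but it closes the cycle differently from the paper. The paper proves $(1)\Rightarrow(4)\Rightarrow(5)\Rightarrow(1)$ (with $(1)\Leftrightarrow(2)\Leftrightarrow(3)$ quoted from Theorem \ref{coe=cartaniso}): there $(1)\Rightarrow(4)$ combines Theorem \ref{coe>fe} with Parry--Sullivan for the determinant and uses $(1)\Rightarrow(3)$ for $\mathcal{O}_A\cong\mathcal{O}_B$; the step $(4)\Rightarrow(5)$ is then trivial functoriality of $K_0$ via \cite{C81Invent}, and $(5)\Rightarrow(1)$ is delegated wholesale to \cite[Theorem 1.1]{Matsumoto13PAMS}. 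You instead run $(1)\Rightarrow(5)\Rightarrow(4)\Rightarrow(3)$, which forces you to prove the harder direction $(5)\Rightarrow(4)$ yourself, and you do so by explicitly invoking R{\o}rdam's classification \cite{R95K} of simple Cuntz--Krieger algebras by the pointed invariant $(K_0,[1])$; your justification that $K_1$ is redundant (both $\Ker(\id-A^t)$ and the free part of $\BF(A^t)$ have rank $N-\mathrm{rank}(\id-A^t)$) is correct, as are the hypotheses you check (irreducible, condition (I), non-permutation, hence simple and purely infinite). The other difference is in $(1)\Rightarrow(5)$: you extract the pointed Bowen--Franks datum directly from the groupoid isomorphism via Theorem \ref{HofGA} (this is exactly the remark at the end of Section 2.5 of the paper), rather than obtaining it downstream of $(4)$ as the paper does. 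Net effect: the paper's route keeps the deep classification theorem hidden inside the citation to \cite{Matsumoto13PAMS} (whose proof itself rests on R{\o}rdam and Huang \cite{H94ETDS}), while your route surfaces that dependence explicitly and uses the PAMS theorem only in its literal form $(4)\Rightarrow(3)$; both assemblies are valid and rest on the same three nontrivial inputs --- Theorem \ref{coe>fe} with Theorem \ref{flow=BF}, the $K_0$/Bowen--Franks identification, and the Matsumoto--R{\o}rdam--Huang circle of results.
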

\begin{proof}
The equivalence between (1), (2) and (3) is already known 
(Theorem \ref{coe=cartaniso}). 
As mentioned in Section 2.5, 
$(K_0(\mathcal{O}_A),[1])$ is isomorphic to $(\BF(A^t),u_A)$, 
and so (4)$\Rightarrow$(5) holds. 
The implication (5)$\Rightarrow$(1) follows 
from \cite[Theorem 1.1]{Matsumoto13PAMS}. 

Suppose that $(X_A,\sigma_A)$ and $(X_B,\sigma_B)$ are 
continuously orbit equivalent. 
It follows from the theorem above that 
the two-sided topological Markov shifts 
$(\bar X_A,\bar\sigma_A)$ and $(\bar X_B,\bar\sigma_B)$ are flow equivalent. 
Therefore, by \cite{PS75Top}, we have $\det(\id-A)=\det(\id-B)$ 
(see Theorem \ref{flow=BF}). 
Since (1)$\Rightarrow$(3) is already known, 
$\mathcal{O}_A$ is isomorphic to $\mathcal{O}_B$. 
Thus we have obtained (4). 
This completes the proof. 
\end{proof}

As mentioned in Section 2.5, 
$\det(\id-A)=0$ when $\BF(A^t)$ is infinite, and 
$\lvert\det(\id-A)\rvert$ equals the cardinality of $\BF(A^t)$ 
when $\BF(A^t)$ is finite. 
Hence, our invariant of the continuous orbit equivalence consists of 
a finitely generated abelian group $F$, an element $u\in F$ 
and $s\in\{-1,0,1\}$ such that $F$ is an infinite group if and only if $s=0$. 
Conversely, for any such triplet $(F,u,s)$, 
there exists an irreducible one-sided topological Markov shift 
whose invariant is equal to $(F,u,s)$. 
This is probably known to experts, 
but the authors are not aware of a specific reference 
and thus include a proof for completeness. 

\begin{lemma}\label{range}
Let $F$ be a finitely generated abelian group and let $u\in F$. 
Let $s=0$ when $F$ is infinite and 
let $s$ be either $-1$ or $1$ when $F$ is finite. 
There exists an irreducible one-sided topological Markov shift 
$(X_A,\sigma_A)$ such that $(F,u)\cong (\BF(A^t),u_A)$ and 
the sign of $\det(\id-A)$ equals $s\in\{-1,0,1\}$. 
\end{lemma}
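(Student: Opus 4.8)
The plan is to realize the prescribed triple by an explicit irreducible graph and then to correct the two finer quantities --- the sign of $\det(\id-A)$ and the marked class $u$ --- by moves that leave the Bowen--Franks group untouched. First I would write $F\cong\Z^k\oplus\Z/n_1\Z\oplus\cdots\oplus\Z/n_l\Z$ using the structure theorem. Since $\lvert\det(\id-A)\rvert=\lvert\BF(A^t)\rvert$ when the group is finite, and $\det(\id-A)=0$ whenever $\BF(A^t)$ is infinite (as recalled in Section 2.5), the magnitude of the determinant is forced and only its sign is free in the finite case, while $s=0$ is automatic in the infinite case. So it suffices to carry out three tasks: build an irreducible $\{0,1\}$-matrix $A_0$ (not a permutation, so that condition (I) holds) with $\BF(A_0^t)\cong F$; adjust the sign of the determinant to $s$; and adjust $u_{A_0}$ to $u$.

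For the first task I would use a hub-and-petal graph with a central connecting gadget. Introduce one hub vertex $h_j$ for each cyclic summand, attaching $n_j+1$ petals (digons $h_j\to x\to h_j$ through fresh vertices) for a summand $\Z/n_j\Z$ and a single petal for a free summand $\Z$. Using that the defining relations in $\BF(A^t)=\Z^N/(\id-A^t)\Z^N$ read $[e_p]=\sum_i A(p,i)[e_i]$, each hub then satisfies $n_j[e_{h_j}]=0$, a single petal imposing no relation and hence giving an infinite-order generator. To make the graph strongly connected without coupling the hubs, I would adjoin a small gadget containing two vertices $z,z'$ (a full-shift pattern) whose internal relations force $[e_z]=0$, and join every hub to $z$ and back. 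The point is that an edge into the zero-class vertex $z$ contributes nothing to a hub relation, whereas the reverse edges merely determine the auxiliary class $[e_{z'}]=-\sum_j[e_{h_j}]$; no relation between distinct hubs is produced. This yields $\BF(A_0^t)\cong\bigoplus_j\Z/n_j\Z\oplus\Z^k\cong F$, with the $[e_{h_j}]$ as summand generators, and strong connectivity holds because every vertex routes through the bubble.

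For the sign I would use the Cuntz splice at a single vertex: this attaches a fixed small block, keeps the matrix an irreducible $\{0,1\}$-matrix, preserves $\BF(A^t)$ (equivalently $K_0(\mathcal{O}_A)$), and multiplies $\det(\id-A)$ by $-1$; applying it zero or one time gives the prescribed $s$ (and nothing is needed when $F$ is infinite). For the marked class I would use edge subdivision. Subdividing an edge into a vertex $j$ inserts a new vertex $m$ with $[e_m]=[e_j]$; it is a flow-equivalence move, so it preserves both $\BF(A^t)$ and $\det(\id-A)$ and keeps the matrix an irreducible $\{0,1\}$-matrix, but it changes the marked class from $u$ to $u+[e_j]$. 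As the graph is irreducible, every vertex has an incoming edge, and re-subdividing the newly created edge lets us add $[e_j]$ arbitrarily often. Hence the attainable marked classes form $u_{A_0}+M$, where $M\subset F$ is the submonoid generated by all the vertex classes $[e_j]$.

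The crux is then to show $M=F$, so that $u$ is actually reached. When $F$ is finite this is immediate, because in a finite group every element is a nonnegative integer combination of any generating family. When $F$ is infinite one needs negative multiples of the free generators, and this is exactly what the bubble supplies: the class $[e_{z'}]=-\sum_j[e_{h_j}]$ has strictly negative free part, and subdividing an edge into $z'$ makes it available, so together with the positive generators $[e_{h_j}]$ the nonnegative combinations already exhaust $\Z^k$; the torsion part is covered as in the finite case. I expect the two genuinely delicate points to be the bubble computation --- verifying that the connecting edges leave the direct-sum structure intact instead of collapsing $F$ onto a cyclic quotient, which is what naive connecting edges between hubs would do --- and this monoid-covering argument.
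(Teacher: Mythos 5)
Your proposal is correct, but it takes a genuinely different route from the paper. The paper first reduces to matrices with entries in $\Z_+$ (via higher block presentations) and then realizes the group \emph{and} the sign simultaneously with one dense matrix: $A(i,j)=1$ off the diagonal, $A(1,1)=2$, $A(i,i)=d_i+2$, for which $\BF(A^t)\cong\Z^r\oplus\bigoplus_{d_i\geq2}\Z/d_i\Z$ and $\det(\id-A)=(-1)^N\prod_{i=2}^Nd_i$, so inserting factors $d_i=1$ toggles the parity of $N$ and hence the sign without touching the group --- no Cuntz splice is needed. The key structural difference is how the marked class is handled: in the paper's base matrix $u_A=0$, so any $u$ admits a representative $(c_1,\dots,c_N)$ with all $c_i\in\Z_+$, and stretching vertex $i$ into a path of $c_i+1$ states (a simultaneous version of your edge subdivisions) sends $u_B$ to $\sum_i(c_i+1)e_i=u+u_A=u$ while preserving $\BF$ and $\det(\id-A)$. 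This $u_A=0$ trick completely sidesteps your monoid-covering lemma: you need the gadget class $[e_{z'}]=-\sum_j[e_{h_j}]$ precisely because your base point and vertex classes give only a submonoid, whereas the paper's base point is $0$ and nonnegativity of the representative is free. Your route buys a construction carried out directly in $\{0,1\}$ matrices and a reusable ``reachable classes $=u_0+M$'' principle, at the cost of the heavier external input that the Cuntz splice preserves $\BF(A^t)$ and flips the sign of $\det(\id-A)$. One small inaccuracy to record: the Cuntz splice does \emph{not} fix the marked class --- the new vertex classes are $[e_{w_1}]=0$ and $[e_{w_2}]=-[e_v]$, so $u$ shifts to $u-[e_v]$; this is harmless in your argument only because the splice precedes the subdivision step and you prove $M=F$, but it should be stated. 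With the gadget wiring made explicit (hubs into $z$, returns out of $z$, so that the $z'$ row forces $[e_z]=0$ and the $z$ row yields $[e_{z'}]=-\sum_j[e_{h_j}]$ without coupling the hubs), your sketch is sound.
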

\begin{proof}
Suppose that we are given $(F,u,s)$. 
It suffices to find a square irreducible matrix $A$ with entries in $\Z_+$ 
satisfying the desired properties 
(see \cite[Section 2.3]{LM} or \cite[Remark 2.16]{CK80Invent}). 
Let $A=[A(i,j)]_{i,j=1}^N$ be an $N\times N$ matrix with entries in $\Z_+$ 
such that $A(1,1)=2$, $A(i,i)\geq2$ and $A(i,j)=1$ 
for all $i,j$ with $i\neq j$. 
Let $d_i=A(i,i)-2$ and let $r=\lvert\{i\mid d_i{=}0\}\rvert-1$. 
Then it is straightforward to see 
\[
\BF(A^t)\cong\Z^r\oplus\bigoplus_{d_i\geq2}\Z/d_i\Z
\quad\text{and}\quad 
\det(\id-A)=(-1)^N\prod_{i=2}^Nd_i. 
\]
Therefore we can construct such $A$ 
so that $\BF(A^t)\cong F$ and the sign of $\det(\id-A)$ equals $s$. 
In what follows we identify $\BF(A^t)$ with $F$. 
Note that $u_A\in\BF(A^t)$ is zero. 
Choose $(c_1,c_2,\dots,c_N)\in\Z^N$ 
whose equivalence class in $\BF(A^t)$ equals $u$. 
Since $u_A$ is zero, we may assume $c_i\in\Z_+$ for all $i$. 
We now construct a new matrix $B$ as follows. 
Set 
\[
\Sigma=\{(i,j)\in\Z_+\times\Z_+\mid1\leq i\leq N,\ 0\leq j\leq c_i\}. 
\]
Define $B=[B((i,j),(k,l))]_{(i,j),(k,l)\in\Sigma}$ by 
\[
B((i,j),(k,l))=\begin{cases}A(i,k)&j=c_i,\ l=0\\
1&i=k,\ j{+}1=l\\0&\text{otherwise. }\end{cases}
\]
The group $\BF(A^t)$ is the abelian group 
with generators $e_1,\dots,e_N$ and relations 
\[
e_i=\sum_{j=1}^NA(i,j)e_j, 
\]
and $u$ equals $\sum c_ie_i$. 
The group $\BF(B^t)$ is the abelian group 
with generators $\{f_{i,j}\mid(i,j)\in\Sigma\}$ and relations 
\[
f_{i,j}=f_{i,j'}\quad\text{and}\quad f_{i,c_i}=\sum_{k=1}^NA(i,k)f_{k,0}, 
\]
and $u_B$ equals $\sum f_{i,j}$. 
Hence $(\BF(A^t),u)$ is isomorphic to $(\BF(B^t),u_B)$. 
It is also easy to see $\det(\id-A)=\det(\id-B)$. 
The proof is completed. 
\end{proof}

For $i=1,2$, let $G_i$ be a minimal essentially principal \'etale groupoid 
whose unit space is a Cantor set. 
It has been shown that the following conditions are mutually equivalent 
(\cite[Theorem 3.10]{Matui12}). 
For a group $\Gamma$, we let $D(\Gamma)$ denote the commutator subgroup. 
\begin{itemize}
\item $G_1$ and $G_2$ are isomorphic as \'etale groupoids. 
\item $[[G_1]]$ and $[[G_2]]$ are isomorphic as discrete groups. 
\item $D([[G_1]])$ and $D([[G_2]])$ are isomorphic as discrete groups. 
\end{itemize}
The \'etale groupoid $G_A$ arising from $(X_A,\sigma_A)$ 
is minimal, essentially principal and purely infinite 
(\cite[Lemma 6.1]{Matui12}). 
Hence $D([[G_A]])$ is simple by \cite[Theorem 4.16]{Matui12}. 
Moreover, $D([[G_A]])$ is finitely generated (\cite[Corollary 6.25]{Matui12}), 
$[[G_A]]$ is of type F$_\infty$ (\cite[Theorem 6.21]{Matui12}) and 
$[[G_A]]/D([[G_A]])$ is isomorphic to $(H_0(G_A)\otimes\Z_2)\oplus H_1(G_A)$ 
(\cite[Corollary 6.24]{Matui12}). 
Theorem \ref{classification} tells us that 
the isomorphism class of $[[G_A]]$ (and $D([[G_A]])$) is determined 
by $(H_0(G_A),[1_{X_A}],\det(\id-A))$ (see also Theorem \ref{HofGA}). 
By Lemma \ref{range}, for each triplet $(F,u,s)$ 
there exists $(X_A,\sigma_A)$ whose invariant agrees with it. 
In particular, the simple finitely generated groups $D([[G_A]])$ are 
parameterized by such triplets $(F,u,s)$. 

We conclude this article by giving a corollary. 
We denote by $\K$ 
the $C^*$-algebra of all compact operators on $\ell^2(\Z)$. 
Let $\mathcal{C}\cong c_0(\Z)$ be the maximal abelian subalgebra of $\K$ 
consisting of diagonal operators. 

\begin{corollary}\label{fe=cartaniso}
Let $(\bar X_A,\bar\sigma_A)$ and $(\bar X_B,\bar\sigma_B)$ be 
two irreducible two-sided topological Markov shifts. 
The following conditions are equivalent. 
\begin{enumerate}
\item $(\bar X_A,\bar\sigma_A)$ and $(\bar X_B,\bar\sigma_B)$ are 
flow equivalent. 
\item There exists an isomorphism 
$\Psi:\mathcal{O}_A\otimes\K\to\mathcal{O}_B\otimes\K$ such that 
$\Psi(\mathcal{D}_A\otimes\mathcal{C})=\mathcal{D}_B\otimes\mathcal{C}$. 
\end{enumerate}
\end{corollary}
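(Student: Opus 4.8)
The plan is to translate condition (2) into a statement about \'etale groupoids and then run an argument parallel to the proof of Theorem \ref{coe>fe}. First I would realize the stabilization groupoid-theoretically: writing $\mathcal{R}$ for the transitive principal groupoid on a countable set (unit space $\Z$), one has $\K\cong C^*_r(\mathcal{R})$ carrying the diagonal $\mathcal{C}$ onto $C_0(\mathcal{R}^{(0)})$. Since $(\mathcal{O}_A,\mathcal{D}_A)\cong(C^*_r(G_A),C(X_A))$, this gives
\[
(\mathcal{O}_A\otimes\K,\ \mathcal{D}_A\otimes\mathcal{C})
\cong\bigl(C^*_r(G_A\times\mathcal{R}),\ C_0(X_A\times\Z)\bigr),
\]
where $C_0(X_A\times\Z)=C_0((G_A\times\mathcal{R})^{(0)})$ is a (non-unital) Cartan subalgebra, a tensor product of Cartan pairs being Cartan. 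The product groupoid $G_A\times\mathcal{R}$ is essentially principal, as $G_A$ is and $\mathcal{R}$ is principal. By the non-unital form of Renault's reconstruction theorem (\cite[Proposition 4.11]{R08Irish}), condition (2) is therefore equivalent to the existence of an isomorphism of \'etale groupoids $G_A\times\mathcal{R}\cong G_B\times\mathcal{R}$. Thus the corollary reduces to showing that flow equivalence of $(\bar X_A,\bar\sigma_A)$ and $(\bar X_B,\bar\sigma_B)$ is equivalent to $G_A\times\mathcal{R}\cong G_B\times\mathcal{R}$.

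For the implication ``groupoid isomorphism $\Rightarrow$ flow equivalence'' I would imitate the proof of Theorem \ref{coe>fe}. Since $\mathcal{R}$ is principal, the isotropy of $G_A\times\mathcal{R}$ at $(x,i)$ is canonically $(G_A)_x$, so $(g,(i,i))$ is attracting in $G_A\times\mathcal{R}$ exactly when $g$ is attracting in $G_A$. Because $\mathcal{R}$ is cohomologically trivial (every continuous homomorphism $\mathcal{R}\to\Z$ has the form $(i,j)\mapsto f(i)-f(j)$, hence is a coboundary), the cohomology of $G_A\times\mathcal{R}$ is carried isomorphically onto $H^1(G_A)$, and the positive cone defined by nonnegativity on attracting elements matches; via the Proposition preceding Theorem \ref{coe>fe} this cone corresponds to $H^A_+$. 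Hence a groupoid isomorphism $G_A\times\mathcal{R}\cong G_B\times\mathcal{R}$, which preserves attracting elements, yields an order isomorphism $(H^A,H^A_+)\cong(H^B,H^B_+)$; Lemma \ref{H=barH} upgrades this to $(\bar H^A,\bar H^A_+)\cong(\bar H^B,\bar H^B_+)$, and Boyle--Handelman's Theorem \ref{flow=barH} gives flow equivalence. I note that $H^1$, unlike $H_0$, carries no distinguished ``unit'' class, which is exactly why the stabilized picture sees flow equivalence rather than continuous orbit equivalence.

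The reverse implication ``flow equivalence $\Rightarrow$ groupoid isomorphism'' is where I expect the real work. Here I would use that, by Parry--Sullivan \cite{PS75Top}, flow equivalence of irreducible shifts of finite type is generated by topological conjugacy together with symbol expansion. Topological conjugacy already yields $G_A\cong G_B$, hence $G_A\times\mathcal{R}\cong G_B\times\mathcal{R}$; so the crux is to realize a single expansion move --- precisely the edge-subdivision operation used to pass from $A$ to $B$ in the proof of Lemma \ref{range} --- as an isomorphism of the stabilized groupoids. An expansion changes the distinguished class $u_A\in H_0(G_A)=\BF(A^t)$ while fixing the underlying group $\BF(A^t)$ and the determinant, and stabilization by $\mathcal{R}$ forgets $u_A$; this matches Franks' invariant (Theorem \ref{flow=BF}), since $G_A\times\mathcal{R}$ retains $(\BF(A^t),\det(\id-A))$ but not $u_A$. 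Constructing the explicit groupoid isomorphism implementing an expansion --- a Kakutani/telescoping-type map on $X_A\times\Z$ that is a well-defined homeomorphism of the non-compact unit space and intertwines the groupoid structures --- and checking compatibility across the generating moves, is the main obstacle; the non-compactness of the unit space also requires care when invoking the reconstruction theorem in the first paragraph.
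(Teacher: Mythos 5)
Your route is genuinely different from the paper's, and for the hard direction (2)$\Rightarrow$(1) it is viable: translating the stabilized diagonal-preserving isomorphism into an isomorphism $G_A\times\mathcal{R}\cong G_B\times\mathcal{R}$ via the non-unital Renault reconstruction, and then rerunning the $H^1$/attracting-element positivity argument on the stabilized groupoid, is essentially the approach later carried out by Carlsen--Ruiz--Sims. The paper never stabilizes groupoids at all. Instead it corrects the unit class: from $K_0(\Psi):\BF(A^t)\to\BF(B^t)$ it uses Lemma \ref{range} to manufacture a model shift $(X_C,\sigma_C)$ with $(\BF(C^t),u_C)\cong(\BF(B^t),K_0(\Psi)(u_A))$ and $\det(\id-B)=\det(\id-C)$; Franks' theorem gives $B\sim C$ flow equivalent; Huang's theorem supplies a diagonal-preserving stable isomorphism $\mathcal{O}_B\otimes\K\to\mathcal{O}_C\otimes\K$ with prescribed behaviour on $K_0$; composing with $\Psi$ yields a diagonal-preserving stable isomorphism $A\to C$ that \emph{does} preserve the unit class, which is then cut down (as in the proof of \cite[Theorem 4.1]{Matsumoto13PAMS}) to a unital isomorphism $(\mathcal{O}_A,\mathcal{D}_A)\cong(\mathcal{O}_C,\mathcal{D}_C)$, so that Theorem \ref{classification} gives $\det(\id-A)=\det(\id-C)=\det(\id-B)$ and Franks' theorem concludes. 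Your approach buys a cleaner conceptual statement (stable diagonal-preserving isomorphism $=$ stabilized groupoid isomorphism $=$ flow equivalence, with $u_A$ visibly forgotten); the paper's buys economy, staying entirely within the unital results already proved and avoiding non-unital reconstruction and stabilized cohomology, whose details (the $H^1(G_A\times\mathcal{R})\cong H^1(G_A)$ computation with non-compact unit space, and the matching of attracting elements across levels of $\Z$) you only sketch.

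The genuine gap is in your implication (1)$\Rightarrow$(2), which you mis-identify as the crux and leave unfinished. That direction is the \emph{known} one: the paper disposes of it with a citation to \cite[Theorem 4.1]{CK80Invent}, so building it from Parry--Sullivan moves is unnecessary, and you concede that the groupoid implementation of the expansion move is an unresolved ``main obstacle.'' Worse, within that argument your assertion that ``topological conjugacy already yields $G_A\cong G_B$'' is unjustified as stated: the conjugacies appearing in the Parry--Sullivan generation of flow equivalence are conjugacies of the \emph{two-sided} shifts, whereas $G_A$ is the groupoid of the \emph{one-sided} shift, and two-sided conjugacy does not obviously (or elementarily) give $G_A\cong G_B$. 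It is in fact true for irreducible shifts, but proving it requires real input --- e.g., Williams' decomposition of a conjugacy into state splittings, the check that the division matrices preserve the pointed group $(\BF(A^t),u_A)$ and $\det(\id-A)$, and then Theorem \ref{classification} itself --- none of which your sketch supplies. So as written, neither generating move of flow equivalence is actually realized on the stabilized groupoids, and the implication (1)$\Rightarrow$(2) remains open in your proposal; you should replace it by the Cuntz--Krieger citation, after which your groupoid argument for (2)$\Rightarrow$(1), with the sketched lemmas filled in, would give a correct, alternative proof.
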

\begin{proof}
(1)$\Rightarrow$(2) is known (\cite[Theorem 4.1]{CK80Invent}). 
Let us assume (2). 
In what follows, we identify the Bowen-Franks group 
with the $K_0$-group of the Cuntz-Krieger algebra. 
We have the isomorphism $K_0(\Psi):\BF(A^t)\to\BF(B^t)$. 
By Lemma \ref{range}, there exists 
an irreducible one-sided topological Markov shift $(X_C,\sigma_C)$ 
such that $(\BF(B^t),K_0(\Psi)(u_A))\cong(\BF(C^t),u_C)$ and 
$\det(\id-B)=\det(\id-C)$. 
It follows from Theorem \ref{flow=BF} that 
$(\bar X_B,\bar\sigma_B)$ is flow equivalent to $(\bar X_C,\bar\sigma_C)$. 
Moreover, by Huang's theorem \cite[Theorem 2.15]{H94ETDS} and its proof, 
there exists an isomorphism 
$\Phi:\mathcal{O}_B\otimes\K\to\mathcal{O}_C\otimes\K$ such that 
$\Phi(\mathcal{D}_B\otimes\mathcal{C})=\mathcal{D}_C\otimes\mathcal{C}$ and 
$K_0(\Phi)(K_0(\Psi)(u_A))=u_C$. 
Then $\Phi\circ\Psi$ is an isomorphism 
from $\mathcal{O}_A\otimes\K$ to $\mathcal{O}_C\otimes\K$ such that 
$(\Phi\circ\Psi)(\mathcal{D}_A\otimes\mathcal{C})
=\mathcal{D}_C\otimes\mathcal{C}$ and $K_0(\Phi\circ\Psi)(u_A)=u_C$. 
In the same way as the proof of \cite[Theorem 4.1]{Matsumoto13PAMS}, 
we can conclude that $(\mathcal{O}_A,\mathcal{D}_A)$ is 
isomorphic to $(\mathcal{O}_C,\mathcal{D}_C)$. 
By virtue of Theorem \ref{classification}, 
we get $\det(\id-A)=\det(\id-C)$. 
Therefore $\det(\id-A)=\det(\id-B)$. 
Hence, by Theorem \ref{flow=BF}, 
$(\bar X_A,\bar\sigma_A)$ and $(\bar X_B,\bar\sigma_B)$ are flow equivalent. 
\end{proof}

\end{document}